\newtheorem{thm}{Theorem}
\newtheorem{prop}{Proposition}
\newtheorem{lem}{Lemma}
\theoremstyle{definition}
\newtheorem{defn}{Definition}
\theoremstyle{remark}
\newtheorem{prob}{Problem}
\title{On Graphs and Codes Preserved by \\Edge Local Complementation}
\author{Lars Eirik Danielsen\thanks{Department of Informatics, University of Bergen, PO Box 7803, \mbox{N-5020} Bergen, Norway. \texttt{\{\href{mailto:larsed@ii.uib.no}{larsed},\href{mailto:matthew@ii.uib.no}{matthew},\href{mailto:joakimk@ii.uib.no}{joakimk}\}@ii.uib.no}\hfill \texttt{http://www.ii.uib.no/\~{}\{\href{http://www.ii.uib.no/\~larsed}{larsed},\href{http://www.ii.uib.no/\~matthew}{matthew},\href{http://www.ii.uib.no/\~joakimk}{joakimk}\}}} 
\and Matthew G. Parker\footnotemark[1]
\and Constanza Riera\thanks{Bergen University College, PO Box 7030, N-5020 Bergen, Norway. \texttt{\href{mailto:csr@hib.no}{csr@hib.no}}}
\and Joakim Grahl Knudsen\footnotemark[1]
}
\date{August 8, 2013}
\begin{document}

\maketitle

\begin{abstract}
  Orbits of graphs under local complementation (LC) and edge local
  complementation (ELC) have been studied in several different
  contexts. For instance, there are connections between orbits of
  graphs and error-correcting codes. We define a new graph class,
  ELC-preserved graphs, comprising all graphs that have an ELC orbit
  of size one. Through an exhaustive search, we find all ELC-preserved
  graphs of order up to 12 and all ELC-preserved bipartite graphs of
  order up to 16. We provide general recursive constructions for
  infinite families of ELC-preserved graphs, and show that all known
  ELC-preserved graphs arise from these constructions or can be
  obtained from Hamming codes. We also prove that certain pairs of
  ELC-preserved graphs are LC equivalent. We define ELC-preserved
  codes as binary linear codes corresponding to bipartite
  ELC-preserved graphs, and study the parameters of such codes.
\end{abstract}

\section{Introduction}\label{sec:intro}

The \emph{local complementation} (LC) operation was first defined by
Kotzig~\cite{kotzig} and later studied by de Fraysseix~\cite{hubert},
Fon-der-Flaas~\cite{flaas}, and Bouchet~\cite{bouchet}. Bouchet also
introduced \emph{edge local complementation} (ELC)~\cite{bouchet}, an
operation which is also known as \emph{pivoting} on a graph.  LC
orbits of graphs have been used to study \emph{quantum graph
  states}~\cite{hein,nest2}, which are equivalent to \emph{self-dual
  additive codes over $\mathbb{F}_4$}~\cite{calderbank}.  LC orbits
have been used to classify such codes~\cite{classlc}.  There are also
connections between graph orbits and properties of \emph{Boolean
  functions}~\cite{pivotboolean, genbent}.  \emph{Interlace
  polynomials} of graphs have been defined with respect to both
LC~\cite{aigner} and ELC~\cite{interlace}. These polynomials encode
certain properties of the graph orbits, and were originally used to
study a problem related to DNA sequencing~\cite{eulerdna}.
Connections between interlace polynomials and error-correcting codes
have also been studied~\cite{interlacecodes}.
Bouchet~\cite{bouchetcircle} proved that a graph is a \emph{circle
  graph} if and only if certain induced subgraphs, or obstructions, do not
appear anywhere in its LC orbit.  Similarly, circle graph obstructions
under ELC were described by Geelen and Oum~\cite{circlepivot}.

In this paper, we introduce \emph{ELC-preserved graphs} as a new class
of graphs, namely those that are invariant under the ELC operation and
therefore having trivial ELC orbits of size one. In light of the
previous works and various applications listed in the previous
paragraph, we feel that ELC-preserved graphs are fundamental objects
worthy of study and, for this paper, we consider both graph- and
code-theoretic interpretations of these objects.

Bipartite graphs correspond to binary linear error-correcting
codes. ELC can be used to generate orbits of equivalent codes and has
previously been used to classify codes~\cite{classelc}.  It has also
been shown that ELC can improve the performance of iterative
decoding~\cite{castle,isit,wbelc,isit2010}. ELC-preserved graphs are
of particularly interest in this context, since for such graphs the
decoding algorithm is equivalent to a variant of \emph{permutation
  decoding}~\cite{castle,halford}.

We will show that the class of codes corresponding to bipartite
ELC-preserved graphs, which we will call \emph{ELC-preserved codes},
is a superset of both the Hamming codes and the extended Hamming
codes, which makes it an interesting class of codes. When it comes to
practical applications in iterative decoding, we conclude that the
ELC-preserved criterion might be too strict to obtain good codes with
appropriate length. However, we suggest that ELC-preserved graphs
could be a building block for good codes, and when we look at ``almost
ELC-preserved'' graphs with ELC-orbits of size two, we find both the
Golay code and a BCH code. Other relaxations of the ELC-preserved
criterion yielding practical error-correction applications have been
considered in other works~\cite{castle,wbelc}. In this paper we focus
on the theoretical properties of ELC-preserved graphs and codes.

This paper is organized as follows. Section~\ref{sec:prelim}
introduces all necessary notation from graph theory and coding theory.
In Section~\ref{sec:enum}, we show that there do exist non-trivial
bipartite and non-bipartite ELC-preserved graphs. We find all
ELC-preserved graphs of order up to 12 and all ELC-preserved bipartite
graphs of order up to 16. In Section~\ref{sec:const}, we show that
\emph{star graphs} and \emph{complete graphs} as well as graphs
corresponding to \emph{Hamming codes} and \emph{extended Hamming
  codes} are ELC-preserved. We then prove that more ELC-preserved
graphs can be obtained from four recursive constructions. Given a
bipartite ELC-preserved graph, a larger bipartite ELC-preserved graph
is constructed by \emph{star expansion}. Similarly, \emph{clique
  expansion} produces non-bipartite ELC-preserved
graphs. \emph{Hamming expansion} and the related \emph{Hamming clique
  expansion} use a special graph of order seven, corresponding to a
Hamming code, to obtain new ELC-preserved graphs. In
Section~\ref{sec:class}, we show that all ELC-preserved graphs of
order up to 12, and all ELC-preserved bipartite graphs of order up to
16, are obtained from these constructions.  We also prove that certain
pairs of ELC-preserved graphs are LC equivalent.  In particular, from
extended Hamming codes, we obtain new non-bipartite ELC-preserved
graphs via LC.  The properties of ELC-preserved codes obtained from
star expansion and Hamming expansion are described in
Section~\ref{sec:coding}.  In particular, we enumerate and construct
new self-dual ELC-preserved codes.  In Section~\ref{sec:sizetwo} we
briefly consider the generalization from ELC-preserved graphs to
graphs with orbits of size two, and study the corresponding codes.
Finally, in Section~\ref{sec:conc}, we conclude with some ideas for
future research.

\section{Preliminaries}\label{sec:prelim}

\subsection{Graphs}

A \emph{graph} is a pair $G=(V,E)$ where $V$ is a set of
\emph{vertices}, and $E \subseteq V \times V$ is a set of
\emph{edges}.  The \emph{order} of $G$ is $n = |V|$.  A graph of order
$n$ can be represented by an $n \times n$ \emph{adjacency matrix}
$\Gamma$, where $\Gamma_{i,j} = 1$ if $\{i,j\} \in E$, and
$\Gamma_{i,j} = 0$ otherwise.  We will only consider \emph{simple}
\emph{undirected} graphs, whose adjacency matrices are symmetric with
all diagonal elements being 0, i.e., all edges are bidirectional and
no vertex can be adjacent to itself.  The \emph{neighborhood} of $v
\in V$, denoted $N_v \subset V$, is the set of vertices connected to
$v$ by an edge.  The number of vertices adjacent to $v$ is called the
\emph{degree} of $v$.  The \emph{induced subgraph} of $G$ on $W
\subseteq V$ is the graph that has $W$ as a set of vertices and
has all edges in $E$ whose endpoints are both in $W$. The
\emph{complement} of $G$ is a graph with the same vertex set, $V$, but
whose edge set consists of the edges not present in G, i.e., the
complement of $E$. (Note that the complement will also be a simple
graph, i.e., no loops are introduced.)  Two graphs $G=(V,E)$ and
$G'=(V,E')$ are \emph{isomorphic} if and only if there exists a
permutation $\pi$ on $V$ such that $\{u,v\} \in E$ if and only if
$\{\pi(u), \pi(v)\} \in E'$.  A \emph{path} is a sequence of distinct vertices,
$(v_1,v_2,\ldots,v_i)$, such that $\{v_1,v_2\}, \{v_2,v_3\},$ $\ldots,
\{v_{i-1},v_{i}\} \in E$.  A graph is \emph{connected} if there is a
path from any vertex to any other vertex in the graph.  A graph is
\emph{bipartite} if its set of vertices can be decomposed into two
disjoint sets, called partitions, such that no two vertices within the
same set are adjacent, and non-bipartite otherwise.  We call a graph
\emph{$(a,b)$-bipartite} if these partitions are of size $a$ and $b$,
respectively.

\begin{defn}[\hspace{1pt}\hspace{-1pt}\cite{flaas,bouchet,hubert}]
  Given a graph $G=(V,E)$ and a vertex $v \in V$, let $N_v \subset V$
  be the neighborhood of $v$.  \emph{Local complementation} (LC) on
  $v$ transforms $G$ into $G * v$ by replacing the induced subgraph of
  $G$ on $N_v$ by its complement. (For an example, see
  Fig.~\ref{fig:lcexample})
\end{defn}

\begin{figure}
 \centering
 \subfloat[The graph $G$]
 {\hspace{5pt}\includegraphics[width=.25\linewidth]{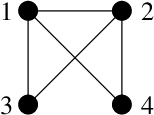}\hspace{5pt}\label{fig:lcexample1}}
 \quad
 \subfloat[The graph $G*1$]
 {\hspace{5pt}\includegraphics[width=.25\linewidth]{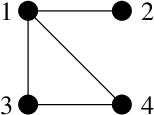}\hspace{5pt}\label{fig:lcexample2}}
 \caption{Example of local complementation}\label{fig:lcexample}
\end{figure}

\begin{defn}[\hspace{1pt}\hspace{-1pt}\cite{bouchet}]\label{prop:triplelc}
  Given a graph $G=(V,E)$ and an edge $\{u,v\} \in E$, \emph{edge
    local complementation} (ELC) on $\{u,v\}$ transforms $G$ into
  $G^{(u,v)} = G*u*v*u = G*v*u*v$.
\end{defn}

\begin{defn}[\hspace{1pt}\hspace{-1pt}\cite{bouchet}]\label{def:elc}
  ELC on $\{u,v\}$ can equivalently be defined as follows. Decompose
  $V\setminus \{u,v\}$ into the following four disjoint sets, as
  visualized in Fig.~\ref{fig:elc}.
  \renewcommand{\labelenumi}{$\Alph{enumi}$}
  \begin{enumerate}
  \item Vertices adjacent to $u$, but not to $v$.
  \item Vertices adjacent to $v$, but not to $u$.
  \item Vertices adjacent to both $u$ and $v$.
  \item Vertices adjacent to neither $u$ nor $v$.
  \end{enumerate}
  To obtain $G^{(u,v)}$, perform the following procedure.  For any
  pair of vertices $\{x,y\}$, where $x$ belongs to class $A$, $B$, or
  $C$, and $y$ belongs to a different class $A$, $B$, or $C$,
  ``toggle'' the pair $\{x, y\}$, i.e., if $\{x,y\} \in E$, delete the
  edge, and if $\{x,y\} \not\in E$, add the edge $\{x,y\}$ to
  $E$. Finally, swap the labels of vertices $u$ and $v$.
\end{defn}

\begin{figure}
 \centering
 \includegraphics[width=.40\linewidth]{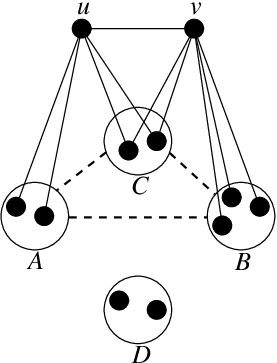}
 \caption{Visualization of the ELC operation}\label{fig:elc}
\end{figure}

\begin{defn}
  The graphs $G$ and $G'$ are \emph{LC-equivalent} (resp.
  \emph{ELC-equivalent}) if a graph isomorphic to $G'$ can be obtained
  by applying a finite sequence of LC (resp. ELC) operations to $G$.
  The \emph{LC orbit} (resp. \emph{ELC orbit}) of $G$ is the set of
  all non-isomorphic graphs that can be obtained by performing any
  finite sequence of LC (resp. ELC) operations on~$G$.
\end{defn}

For bipartite graphs, we can simplify the ELC operation, since the set
$C$ in Definition~\ref{def:elc} must be empty. Given a bipartite graph
$G=(V,E)$ and an edge $\{u,v\} \in E$, $G^{(u,v)}$ can be obtained by
``toggling'' all edges between the sets $N_u \setminus \{v\}$ and $N_v
\setminus \{u\}$, followed by a swapping of vertices $u$ and $v$.
Moreover, if $G$ is an $(a,b)$-bipartite graph, then, for any edge
$\{u,v\} \in E$, $G^{(u,v)}$ must also be
$(a,b)$-bipartite~\cite{pivotboolean}. Note that LC does not, in
general, preserve bipartiteness.  It follows from
Definition~\ref{prop:triplelc} that every LC orbit can be partitioned
into one or more ELC orbits.  If $G=(V,E)$ is a connected graph, then,
for any vertex $v \in V$, $G*v$ must also be connected.  Likewise, for
any edge $\{u,v\} \in E$, $G^{(u,v)}$ must be connected.

\begin{defn}
  A graph $G$ is \emph{ELC-preserved} if for any edge
  $\{u,v\} \in E$, $G^{(u,v)}$ is isomorphic to $G$. In other words,
  $G$ is ELC-preserved if and only if the ELC orbit has $G$ as the only element.
\end{defn}

We only consider connected graphs, since a disconnected graph is
ELC-preserved if and only if its connected components are
ELC-preserved. Trivially, empty graphs, i.e., graphs with no edges,
are ELC-preserved. 

\subsection{Codes}

A binary linear code, $\mathcal{C}$, is a linear subspace of
$\mathbb{F}_2^n$ of dimension $k$. The $2^k$ elements of $\mathcal{C}$
are called \emph{codewords}.  The \emph{Hamming weight} of a codeword
is the number of non-zero components. The \emph{minimum distance} of
$\mathcal{C}$ is equal to the smallest non-zero weight of any codeword
in $\mathcal{C}$.  A code with minimum distance~$d$ is called an
$[n,k,d]$ code.
Two codes are \emph{equivalent} if one can be obtained from the other
by a permutation of the coordinates.  A permutation that maps a code
to itself is called an \emph{automorphism}. All automorphisms of
$\mathcal{C}$ make up its \emph{automorphism group}. We define the
\emph{dual code} of $\mathcal{C}$ with respect to the standard inner
product, $\mathcal{C}^\perp = \{ \boldsymbol{u} \in \mathbb{F}_2^n
\mid \boldsymbol{u} \cdot \boldsymbol{c}=0 \text{ for all }
\boldsymbol{c} \in \mathcal{C} \}$.  $\mathcal{C}$~is called
\emph{self-dual} if $\mathcal{C} = \mathcal{C}^\perp$, and
\emph{isodual} if $\mathcal{C}$ is equivalent to $\mathcal{C}^\perp$.
The code $\mathcal{C}$ can be defined by a $k \times n$
\emph{generator matrix}, $C$, whose rows span $\mathcal{C}$.  By
column permutations and elementary row operations $C$ can be
transformed into a matrix of the form $C' = (I \mid P)$, where $I$ is
a $k \times k$ identity matrix, and $P$ is some $k \times (n-k)$
matrix. The matrix $C'$, which is said to be of \emph{standard form},
generates a code which is equivalent to $\mathcal{C}$.  The matrix $H'
= (P^\text{T} \mid I)$, where $I$ is an $(n-k) \times (n-k)$ identity
matrix is the generator matrix of $\mathcal{C}'^\perp$ and is called
the \emph{parity check matrix} of $\mathcal{C}'$.

\begin{defn}[\hspace{1pt}\hspace{-1pt}\cite{curtis,rijmen}]\label{def:code}
  Let $\mathcal{C}$ be a binary linear $[n,k]$ code with generator
  matrix $C = (I \mid P)$. Then the code $\mathcal{C}$ corresponds to
  the $(k,n-k)$-bipartite graph on $n$ vertices with adjacency matrix
\[
\Gamma = \begin{pmatrix}\boldsymbol{0}_{k\times k} & P \\ 
P^\text{T} & \boldsymbol{0}_{(n-k)\times (n-k)}\end{pmatrix},
\]
where $\boldsymbol{0}$ denotes all-zero matrices of the specified
dimensions.
\end{defn}

\begin{thm}[\hspace{1pt}\hspace{-1pt}\cite{classelc}]\label{thm:elccode}
  Applying any sequence of ELC operations to a graph corresponding to
  a code $\mathcal{C}$ will produce another graph corresponding to the
  code $\mathcal{C}$.  Moreover, graphs corresponding to
  equivalent codes will always belong to the same ELC orbit (up to isomorphism).
\end{thm}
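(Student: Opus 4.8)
The plan is to realize the ELC operation on a bipartite graph as a \emph{pivot} on an associated generator matrix. Fix a standard-form generator $(I_k\mid P)$ of $\mathcal{C}$, so that the corresponding $(k,n-k)$-bipartite graph $G$ has the adjacency matrix $\Gamma$ of Definition~\ref{def:code}; the left vertices are indexed by the rows of $P$ and the right vertices by its columns. Take an edge $\{u,v\}$, with $u$ the left vertex at row position $a$ and $v$ the right vertex at column position $b$, so that $P_{a,b}=1$. Using the simplified description of ELC on bipartite graphs recalled above --- toggle every edge between $N_u\setminus\{v\}$ and $N_v\setminus\{u\}$, then swap the labels $u$ and $v$ --- I would compute that the entries of $P$ change by $P_{r,c}\mapsto P_{r,c}+P_{r,b}P_{a,c}$ for all $r\neq a$ and $c\neq b$, since $P_{r,c}$ is toggled precisely when $r\in N_v$ and $c\in N_u$, while the label swap moves the vertex at row position $a$ to column position $b$ and vice versa.

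Next I would check that this is exactly the effect, on the block $P$, of exchanging column $a$ of $(I_k\mid P)$ with column $k+b$ and then performing the elementary row operations that restore an identity block on the new information set $\{1,\dots,k\}\setminus\{a\}\cup\{k+b\}$; the final reordering of coordinates that puts this information set into the first $k$ positions is precisely the swap of $u$ and $v$. Exchanging two coordinates produces an equivalent code, and row operations do not change the code at all, so one ELC step sends a graph corresponding to $\mathcal{C}$ to a graph corresponding to a code equivalent to $\mathcal{C}$; an induction on the length of the ELC sequence then proves the first assertion.

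For the converse, I would show that any two standard-form generator matrices whose codes are equivalent to $\mathcal{C}$ give ELC-equivalent graphs. First reduce to a single code: if $\mathcal{C}'=\pi(\mathcal{C})$ for a coordinate permutation $\pi$, then applying $\pi^{-1}$ to the columns of a standard-form generator of $\mathcal{C}'$ yields a (generally non-standard) generator of $\mathcal{C}$ with information set $\pi^{-1}(\{1,\dots,k\})$, and row-reducing it and relabeling the coordinates inside and outside this information set only permutes the left vertices among themselves and the right vertices among themselves, which is a graph isomorphism. So it suffices to show that the graphs obtained from two information sets $S$ and $S'$ of $\mathcal{C}$ are ELC-equivalent. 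By the basis-exchange property of the column matroid of the generator matrix, $S$ and $S'$ are joined by a chain of single-element exchanges, and a valid exchange $S\mapsto S\setminus\{a\}\cup\{b\}$ corresponds to a nonzero entry of the current standard-form matrix, i.e.\ to an edge; performing one pivot on that entry --- equivalently, by the computation above read in reverse, one ELC operation followed by a coordinate reordering (a graph isomorphism) --- realizes that exchange. Chaining these steps proves the second assertion.

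The main obstacle is the bookkeeping that establishes the ELC-equals-pivot dictionary: one must line up the ``swap the labels $u$ and $v$'' step with the reordering of coordinates needed to return the pivoted generator to standard form, and keep strict track of which vertex relabelings are honest graph isomorphisms and which ones alter the code. Once that dictionary is in hand both directions follow quickly, the converse relying only on the standard fact that the bases (information sets) of a matroid are connected under single-element exchange.
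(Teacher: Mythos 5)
The paper does not prove this theorem itself; it is quoted with a citation to the earlier classification paper \cite{classelc}, where the proof appears. Your argument is correct and is essentially that proof: you realize ELC on an edge of the bipartite graph as a pivot on the generator matrix (exchange a column of the identity block with a column of $P$, row-reduce to restore standard form, and absorb the final coordinate reordering into the $u$--$v$ label swap, with $P_{r,c}\mapsto P_{r,c}+P_{r,b}P_{a,c}$ being exactly the toggling rule), and for the converse you reduce to a single code via a graph isomorphism and connect any two information sets by single-element matroid basis exchanges, each realized by one such pivot/ELC step.
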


Note that, up to isomorphism, one bipartite graph corresponds to both
the code $\mathcal{C}$ generated by $(I \mid P)$, and the code
$\mathcal{C}^\perp$ generated by $(P^\text{T} \mid I)$. When $\mathcal{C}$ is
isodual, the ELC-orbit of the associated graph corresponds to a single
equivalence class of codes. Otherwise, the ELC-orbit corresponds to two
equivalence classes, that of $\mathcal{C}$ and that of
$\mathcal{C}^\perp$~\cite{classelc}.

\begin{defn}
  An \emph{ELC-preserved code} is a binary linear code corresponding
  to an ELC-preserved bipartite graph.
\end{defn}

It follows from Theorem~\ref{thm:elccode} that ELC allows us to jump
between all standard form generator matrices of a code. Hence an
ELC-preserved code is a code that has only one standard form generator
matrix, up to column permutations.

\begin{thm}[\hspace{1pt}\hspace{-1pt}\cite{classelc}]\label{thm:distance}
  The minimum distance of an $[n,k,d]$ binary linear code
  $\mathcal{C}$ is $d = \delta + 1$, where $\delta$ is the smallest
  vertex degree of any vertex in a fixed partition of size $k$ over
  all graphs in the associated ELC orbit. The minimum vertex degree in
  the other partition over the ELC orbit gives the minimum distance of
  $\mathcal{C}^\perp$.
\end{thm}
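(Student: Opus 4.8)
The plan is to show that $\delta = d-1$ by establishing $\delta \ge d-1$ and $\delta \le d-1$ separately, using Theorem~\ref{thm:elccode} to pass freely between the graphs in the ELC orbit and the standard-form generator matrices of $\mathcal{C}$ (equivalently, the information sets of $\mathcal{C}$); the statement for $\mathcal{C}^\perp$ then follows by a symmetric argument.

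The inequality $\delta \ge d-1$ is the easy half. Every graph $G'$ in the orbit is $(k,n-k)$-bipartite and, by Theorem~\ref{thm:elccode}, corresponds to a code equivalent to $\mathcal{C}$, hence one of minimum distance $d$. Writing its generator matrix as $(I \mid P')$, the row indexed by any vertex $v$ of the size-$k$ partition is a nonzero codeword of weight $1 + \deg_{G'}(v)$, so $1 + \deg_{G'}(v) \ge d$. Minimizing over all graphs $G'$ in the orbit and all vertices $v$ of the size-$k$ partition gives $\delta \ge d-1$.

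For the reverse inequality I would fix a minimum-weight codeword $c$ and let $S = \operatorname{supp}(c)$, so $|S| = d$. The key step is to find an information set $T$ (a set of $k$ coordinates on which $\mathcal{C}$ projects bijectively onto $\mathbb{F}_2^k$) with $|T \cap S| = 1$. To do this, observe that $\mathcal{D} := \{x \in \mathcal{C} : \operatorname{supp}(x) \subseteq S\}$ satisfies: any nonzero $x \in \mathcal{D}$ has $d \le \wt(x) \le |S| = d$, hence $\operatorname{supp}(x) = S$, which over $\mathbb{F}_2$ forces $x = c$; thus $\mathcal{D} = \{\mathbf{0}, c\}$. Now pick any $s \in S$ and set $U = \{1,\dots,n\} \setminus (S \setminus \{s\})$. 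A codeword vanishing on $U$ has support inside $S \setminus \{s\}$, so it lies in $\mathcal{D}$ and omits $s$, hence is $\mathbf{0}$; therefore $\mathcal{C}$ projects injectively onto the coordinates in $U$, so $U$ contains an information set, and since $\{s\} \subseteq U$ is independent (as $c_s = 1$) we may augment $\{s\}$ within $U$ to an information set $T \subseteq U$ with $s \in T$, whence $T \cap S \subseteq U \cap S = \{s\}$. Given such a $T$: the standard-form generator matrix of $\mathcal{C}$ with respect to $T$ has, as the row indexed by $s$, the unique codeword with $T$-restriction $e_s$, which is $c$ itself since $c|_T = e_s$; after permuting $T$ to the front this is a matrix $(I \mid P)$ generating a code equivalent to $\mathcal{C}$, so its graph lies in the orbit by Theorem~\ref{thm:elccode}, and in that graph the vertex corresponding to $s$ has degree $\wt(c) - 1 = d-1$. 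Hence $\delta \le d-1$, and combined with the first part, $d = \delta + 1$.

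Finally, $\mathcal{C}^\perp$ is generated by $(P^{\mathrm T} \mid I)$, which after swapping the two column blocks is the standard form attached to the same graph with its two partitions interchanged, so applying everything above to $\mathcal{C}^\perp$ shows that its minimum distance equals one plus the smallest vertex degree in the size-$(n-k)$ partition over the orbit. I expect the only real obstacle to be the existence of the information set $T$ meeting $S$ in exactly one coordinate; the remainder is bookkeeping with Theorem~\ref{thm:elccode} and the relation between vertex degrees and codeword weights. (In matroid terms this step says that the support of a minimum-weight codeword is a fundamental cocircuit of some basis of the column matroid of a generator matrix; alternatively one could reach the required graph from $G$ by an explicit sequence of ELC/pivot operations along a path in the basis-exchange graph.)
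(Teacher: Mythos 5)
Your proof is correct: the lower bound $\delta \ge d-1$ from reading each row of $(I \mid P')$ as a codeword of weight $1+\deg(v)$, and the upper bound via an information set $T$ meeting the support of a minimum-weight codeword in exactly one coordinate (your construction of $T$ inside $U$ is sound, since column $s$ is nonzero and the columns indexed by $U$ have full rank), combined with Theorem~\ref{thm:elccode} to place the resulting standard-form graph in the orbit. Note that this paper states the theorem only by citation to~\cite{classelc} and gives no proof of its own; your argument is essentially the standard one used for that cited result, including the symmetric treatment of $\mathcal{C}^\perp$ via $(P^{\mathrm{T}} \mid I)$, so there is nothing to object to.
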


For an ELC-preserved graph, Theorem~\ref{thm:distance} means that the
minimum distance of the associated code, and its dual code, can be
found simply by finding the minimum vertex degree in each partition of
the graph.

In the technique of iterative decoding with
ELC~\cite{castle,isit,wbelc,isit2010}, labeled graphs are used, so
that ELC is equivalent to row additions on an initial generator matrix
of the form $(I \mid P)$, which means that the corresponding code is
preserved. (It is the parity check matrix of the code that is actually
used for decoding, but we have already seen that, up to isomorphism,
the bipartite graph corresponding to the generator matrix and parity
check matrix of a code is the same.)  For an ELC-preserved code, all
generator matrices must be column permutations of one unique generator
matrix, and hence these permutations must all be automorphisms of the
code. It follows that iterative decoding with ELC on an ELC-preserved
code is equivalent to a variant of \emph{permutation
  decoding}~\cite{castle,halford}.

\section{Enumeration}\label{sec:enum}

From previous classifications~\cite{classlc,classelc}, we know the ELC
orbit size for all graphs of order $n \le 12$, and all bipartite
graphs of order $n \le 15$. (A database of ELC orbits is available
on-line at \url{http://www.ii.uib.no/~larsed/pivot/}.) We find that a
small number of ELC orbits of size one exist for each
order~$n$. Despite the much smaller number of bipartite graphs, there
are approximately the same number of ELC-preserved bipartite and
non-bipartite graphs for $n \le 12$.  The numbers of ELC-preserved
graphs, together with the total numbers of ELC orbits, are given in
Table~\ref{tab:enumerate}. Note that all numbers are for connected
graphs.

By using an extension technique we were also able to generate all
ELC-preserved bipartite graphs of order $n=16$. Given the 1,156,716
ELC orbit representatives for $n=15$, we extend each $(a,b)$-bipartite
graph in $2^a+2^b-2$ ways, by adding a new vertex and connecting it to
all possible combinations of at least one of the old vertices. The
complete set of extended graphs is significantly smaller than that set
of all bipartite connected graphs of order 16, but it must contain at
least one representative from each ELC orbit. To see that this is
true, consider a connected bipartite graph $G$ of order 16.  The
induced subgraph on any 15 vertices of $G$ must be ELC-equivalent to
one of the graphs that were extended to form the extended set, and
hence there must be at least one graph in the extended set that is
ELC-equivalent to $G$.  We check each member of the extended set, and
find that there are 6 connected bipartite ELC-preserved graphs of
order 16.  Note that this is the same extension technique that was
used to classify ELC orbits~\cite{classelc}, but checking if a graph
is ELC-preserved is much faster than generating its entire ELC orbit,
since we only need to consider ELC on each edge of the graph, and can
stop and reject the graph as soon as a second orbit member is
discovered.

\begin{table}
\centering
\caption{Number of non-bipartite ELC orbits ($nb_n$), non-bipartite ELC-preserved graphs ($nbp_n$),
bipartite ELC orbits ($b_n$), and bipartite ELC-preserved graphs ($bp_n$)}
\label{tab:enumerate}
\begin{tabular}{rrrrr}
\toprule
$n$ & $nb_n$ & $nbp_n$ & $b_n$ & $bp_n$ \\
\midrule
2  &           - &        - &          1 &          1 \\
3  &           1 &        1 &          1 &          1 \\
4  &           2 &        1 &          2 &          1 \\
5  &           7 &        1 &          3 &          1 \\
6  &          27 &        2 &          8 &          2 \\
7  &         119 &        1 &         15 &          2 \\
8  &         734 &        2 &         43 &          3 \\
9  &       6,592 &        3 &        110 &          2 \\
10 &     104,455 &        3 &        370 &          2 \\
11 &   3,369,057 &        2 &      1,260 &          1 \\
12 & 231,551,924 &        6 &      5,366 &          5 \\
13 &             &          &     25,684 &          1 \\
14 &             &          &    154,104 &          5 \\
15 &             &          &  1,156,716 &          4 \\
16 &             &          &          ? &          6 \\
\bottomrule
\end{tabular}
\end{table}

\section{Constructions}\label{sec:const}

For all $n \ge 2$, there is a bipartite ELC-preserved graph of order
$n$, namely the \emph{star graph}, denoted $s^n$. This graph has one
vertex, $v$, of degree $n-1$ and $n-1$ vertices, $u_1, u_2, \ldots,
u_{n-1}$, of degree 1. Clearly the graph is ELC-preserved, since for
all edges $\{u_i, v\}$, $N_{u_i} \setminus \{v\} = \emptyset$.  The
construction given in Theorem~\ref{sexapand} gives us more bipartite
ELC-preserved graphs.  For brevity, we will denote $N_v^u = N_v
\setminus (N_u \cup \{u\})$.  Let $e^n$ denote the \emph{empty graph}
on $n$ vertices, i.e., a graph with no edges.

\begin{defn}[\hspace{1pt}\hspace{-1pt}\cite{bollobas, interlace}]\label{def:subst}
  Given a graph $G=(V,E)$, a vertex $v \in V$, and another graph
  $H=(V',E')$, where $V \cap V' = \emptyset$, by \emph{substituting}
  $v$ with $H$, we obtain the graph $G'=((V \setminus \{v\}) \cup V',
  E'')$, where $E''$ is obtained by taking the union of $E$ and $E'$,
  removing all edges incident on $v$, and joining all vertices in $V'$
  to $w$ whenever $\{v,w\} \in E$.
\end{defn}

\begin{defn}[\hspace{1pt}\hspace{-1pt}\cite{monaghan}]
  Given a graph $G=(V,E)$, and a vertex $v \in V$, we add a
  \emph{pendant} at $v$ by adding a new vertex $w$ to $V$ and a new
  edge $\{v, w\}$ to $E$.
\end{defn}

\begin{thm}[Star expansion]\label{sexapand}
  Given an ELC-preserved bipartite graph $G=(V,E)$ on $k$ vertices and
  an integer $m > 1$, we obtain an ELC-preserved bipartite graph
  $S^m(G)$ on $n=km$ vertices by substituting all vertices in one
  partition of $G$ with $e^m$ and adding $m-1$ pendants to all
  vertices in the other partition.
\end{thm}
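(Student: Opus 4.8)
The plan is to set up coordinates on $S^m(G)$ that mirror the structure of $G$, then verify the ELC-preserved property edge by edge using the simplified bipartite ELC rule (toggle all edges between $N_u\setminus\{v\}$ and $N_v\setminus\{u\}$, then swap $u$ and $v$). First I would fix notation: let the two partitions of $G$ be $X$ (the vertices substituted by $e^m$) and $Y$ (the vertices receiving $m-1$ pendants). In $S^m(G)$, each $x\in X$ becomes a set $\{x^{(1)},\dots,x^{(m)}\}$ of $m$ nonadjacent ``clones,'' each inheriting $x$'s neighborhood in $Y$; and each $y\in Y$ keeps its original neighbors among the clones of $X$ and additionally gains $m-1$ degree-one pendants $p_y^{(1)},\dots,p_y^{(m-1)}$. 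One checks this graph is bipartite with partitions of size $|X|\cdot m$ and $|Y|+|Y|(m-1)=|Y|m$, so it has $km$ vertices as claimed, and it is connected because $G$ is.

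The core of the argument is a case analysis on the type of edge $\{u,v\}$ in $S^m(G)$ along which we pivot. There are two cases. \emph{Case 1: $\{u,v\}$ is a pendant edge}, say $u=y\in Y$ and $v=p_y^{(i)}$. Then $N_v\setminus\{u\}=\emptyset$, so ELC on $\{u,v\}$ toggles nothing and merely swaps the labels of $y$ and its pendant $p_y^{(i)}$; the resulting graph is literally the same graph with two vertex labels exchanged, hence isomorphic to $S^m(G)$. \emph{Case 2: $\{u,v\}$ is an edge between a clone and a $Y$-vertex}, say $u=x^{(j)}$ and $v=y$ with $\{x,y\}\in E$. Then $N_u\setminus\{v\}$ consists of the vertices $y'\in Y$ with $\{x,y'\}\in E$, $y'\ne y$ (i.e.\ a copy of $N_x^G\setminus\{y\}$, with no pendants attached here since pendants hang off $Y$, not off clones of $X$), while $N_v\setminus\{u\}$ consists of all clones $x'^{(\ell)}$ with $\{x',y\}\in E$ — in particular \emph{all} $m$ clones of each such $x'$, including all $m$ clones of $x$ itself except $u=x^{(j)}$ — together with the $m-1$ pendants $p_y^{(\ell)}$. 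The pendants of $y$ are adjacent to nothing in $N_u\setminus\{v\}$ and stay put under the toggle; so the toggled edges are exactly those between $\{$copies of $N_x^G\setminus\{y\}\}$ and $\{$clones of the $x'$ with $\{x',y\}\in E\}$. Now I would invoke the hypothesis that $G$ is ELC-preserved: pivoting $G$ along $\{x,y\}$ toggles the edges between $N_x^G\setminus\{y\}$ and $N_y^G\setminus\{x\}$ and then swaps $x,y$, producing a graph isomorphic to $G$ via some permutation $\pi$. The key claim is that $\pi$ lifts to an isomorphism $\tilde\pi$ of $S^m(G)$ witnessing $S^m(G)^{(u,v)}\cong S^m(G)$: clones map to clones according to $\pi$ on $X$ (sending the $j$-th clone of $x$ to a chosen clone of $\pi(y)$, and reindexing the remaining clones of $x$ to the clones of $\pi(x)=$ the image that plays $x$'s former role, etc.), $Y$-vertices map according to $\pi$ on $Y$ with $y\mapsto$ (the vertex playing the role of the new degree... ) and pendants map to pendants compatibly. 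Verifying that $\tilde\pi$ carries the toggled graph $S^m(G)^{(u,v)}$ exactly onto $S^m(G)$ reduces, after stripping off the clone-multiplicities and the inert pendants, to the statement that $\pi$ carries $G^{(x,y)}$ onto $G$, which holds by hypothesis.

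The main obstacle is bookkeeping in Case 2: one must check that the ``extra'' edges present in $S^m(G)$ but absent in $G$ — namely the redundant parallelism among the $m$ clones of each $X$-vertex, and the pendant structure on $Y$ — are left invariant (up to the relabeling $\tilde\pi$) by the pivot. Concretely, the subtle point is that when we pivot along $\{x^{(j)},y\}$, the set $N_v\setminus\{u\}$ contains the \emph{other} $m-1$ clones of $x$, so the toggle acts between $N_x^G\setminus\{y\}$ and (among other things) those sibling clones; but $N_x^G\setminus\{y\}\subseteq N_v\setminus\{u\}$'s complement-partner is precisely a set of clones, and one has to confirm that after toggling, clone $x^{(j)}$ has acquired exactly the neighborhood that a $Y$-vertex should have (it becomes adjacent to the clones of all $x'$ with $\{x',y\}\in E$ — i.e.\ it now behaves like the pivoted image of $y$) and, crucially, that it picks up the requisite $m-1$ pendant-like degree-one neighbors. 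That last requirement is exactly why the construction adds $m-1$ pendants to $Y$: after the pivot the roles of the clone and the $Y$-vertex swap, and the $m-1$ siblings of $x^{(j)}$, now detached from $x^{(j)}$ by the toggle and reattached appropriately, supply the pendant-count needed to make $x^{(j)}$ a legitimate ``$Y$-type'' vertex of $S^m(G)$. Once this matching of multiplicities is pinned down, the rest is the routine verification that $\tilde\pi$ is an isomorphism, which I would state and leave to the reader as a direct check.
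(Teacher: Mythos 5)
Your overall strategy is the same as the paper's (split into pendant edges and clone--$Y$ edges, use ELC-preservation of $G$ for the ``core'' toggles, and lift an isomorphism), and Case~1 is fine, but Case~2 contains a genuine error in how the ELC toggle acts, and as a result the decisive step is never correctly established. You assert that the pendants of $y$ ``are adjacent to nothing in $N_u\setminus\{v\}$ and stay put under the toggle.'' That is backwards: ELC complements \emph{every} pair between $N_u\setminus\{v\}$ and $N_v\setminus\{u\}$, so precisely because the pendants have no edges to $N_u\setminus\{v\}$ beforehand, each pendant of $y$ \emph{gains} an edge to every vertex of $N_u\setminus\{v\}$. Likewise, your later claim that after toggling the clone $x^{(j)}$ ``has acquired exactly the neighborhood that a $Y$-vertex should have (it becomes adjacent to the clones of all $x'$ with $\{x',y\}\in E$)'' is also not what ELC does: edges incident to the pivot endpoints $u=x^{(j)}$ and $v=y$ are never toggled; only the two labels are swapped, so the physical vertex $x^{(j)}$ keeps its old neighborhood. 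Similarly, the $m-1$ sibling clones of $x$ were never attached to $x^{(j)}$ (the substitution is by $e^m$), so they are not ``detached from $x^{(j)}$ and reattached''; what actually happens is that they lose all their edges to $N_u\setminus\{v\}$ and retain only the edge to the physical vertex $y$.

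The fact you need, and which your write-up never states correctly, is exactly the paper's key observation: with $A$ the copies of $N_x^G\setminus\{y\}$, $C$ the $m-1$ pendants of $y$, and $D$ the $m-1$ sibling clones of $x^{(j)}$, the toggle between $A$ and $C$ attaches every $C$-vertex to all of $A$, the toggle between $A$ and $D$ detaches every $D$-vertex from all of $A$, and since $|C|=|D|=m-1$ and both sets have $v$ as their only other neighbor, the net effect is simply to swap $C$ with $D$. Combined with the fact that the toggles between $A$ and each copy $B_j$ of $N_y^G\setminus\{x\}$ reproduce, copy by copy, the toggle performed by ELC on $\{x,y\}$ in $G$ (the pendants hanging on $A$ being irrelevant to this), one gets that the pivoted graph is $S^m$ of a graph isomorphic to $G^{(x,y)}\cong G$, hence isomorphic to $S^m(G)$. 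Your lifted isomorphism $\tilde\pi$ is the right idea, but as written it is verified against the wrong graph: if the pendants really ``stayed put'' and $x^{(j)}$ really changed its neighborhood, the resulting graph would not be the one ELC produces. Fixing the proof requires replacing your description of the toggle by the $C\leftrightarrow D$ swap argument above (or an equivalent bookkeeping), after which the rest of your outline goes through.
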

\begin{proof}
  Let $\{u,v\} \in E$. Without loss of generality, assume that $u$ is
  substituted by $u_1, \ldots, u_m$, all incident on $v$. Moreover,
  pendant vertices $w_1, \ldots, w_{m-1}$ are added, with $v$ as their
  only neighbor.  Clearly ELC on $\{v, w_i\}$ is ELC-preserving. Due
  to symmetries, it only remains to show that ELC on an edge $\{u_i,
  v\}$ preserves $S^m(G)$.  In the graph $G$, let $A = N_u^v$ and $B =
  N_v^u$.  In the graph $S^m(G)$, $N_{u_i}^v = A$, and $N_v^{u_i} =
  (B_1 \cup \cdots \cup B_m) \cup C \cup D$, where $C = \{w_1, \ldots,
  w_{m-1}\}$ and $D = \{u_1, \ldots, u_m\} \setminus \{u_i\}$.  The
  subgraph induced on $A \cup B_j$ in $S^m(G)$, for $1 \le j \le m$,
  is isomorphic to the subgraph induced on $A \cup B$ in $G$.  ELC on
  $\{u_i, v\}$ means that we toggle all pairs of vertices between
  $N_{u_i}^v$ and $N_v^{u_i}$.  Toggling pairs between $A$ and $B_j$,
  for $1 \le j \le m$, preserves $S^m(G)$, since toggling pairs
  between $A$ and $B$ preserves $G$.  (The fact that all vertices in
  $A$ have $m-1$ added pendants has no effect on this.)  Finally, in
  addition to swapping $u_i$ and $v$, ELC has the effect of toggling
  pairs of vertices between $A$ and $C$, and between $A$ and $D$.  In
  $S^m(G)$, all vertices in $A$ are connected to all vertices in $D$,
  and no vertex in $A$ is connected to any vertex in $C$. The sets $C$
  and $D$ are both of size $m-1$, the vertices in $C$ have no other
  neighbors than $v$, and the vertices in $D$ have no other neighbors
  than $A \cup \{v\}$.  Hence ELC on $\{u_i, v\}$ simply swaps the
  vertices in $C$ with the vertices in $D$.  This means that
  $S^m(G)^{(u_i,v)}$ is isomorphic to $S^m(G)$, and it follows that
  $S^m(G)$ is ELC-preserved.  Furthermore, $S^m(G)$ must be bipartite,
  since substituting vertices by empty graphs and adding pendants
  cannot make a bipartite graph non-bipartite.
\end{proof}

\begin{figure}
 \centering
 {\hfill
 \subfloat[The graph $S_-^2(s^3)$]{\includegraphics[width=.4\linewidth]{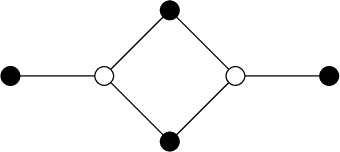}\label{fig:starexample1}}
 \hfill
 \subfloat[The graph $S_-^2(S_-^2(s^3))$]{\includegraphics[width=.55\linewidth]{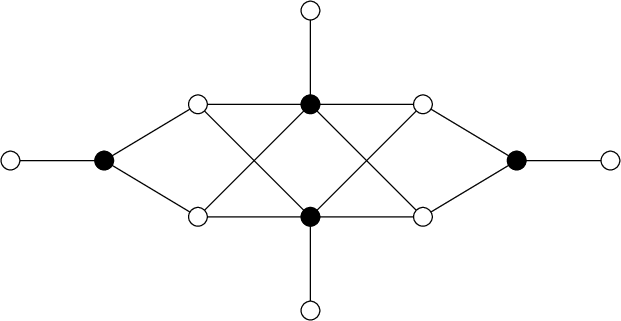}\label{fig:starexample2}}
 \hfill}
 \caption{Examples of star expansion}\label{fig:starexample}
\end{figure}

Examples of graphs obtained by star expansion are shown in
Fig.~\ref{fig:starexample}. From Theorem~\ref{sexapand} we can obtain
two different graphs, by choosing in which partition of $G$ we
substitute vertices by $e^m$.  In our examples, when the partitions of
$G$ are of unequal size, we write $S_+^m(G)$ when we substitute the
vertices in the largest partition, and $S_-^m(G)$ when we substitute
the vertices in the smallest partition. In the cases where the
partitions are of equal size, $S^m(G)$ will give the same graph for
both partitions in all examples in this paper.  If $G$ is an $(r,
k-r)$-bipartite graph, then $S^m(G)$ will be $(r+k(m-1),
k-r)$-bipartite. Since its output is always bipartite, the star
expansion construction can be iterated to obtain new ELC-preserved
graphs, such as the graph $S_-^2(S_-^2(s^3))$ of order 12, shown in
Fig.~\ref{fig:starexample2}. However, some of these iterated
constructions can be simplified. For instance, it is easy to verify
that $S_+^m(s^k) = s^{km}$ and $S_+^{m_2}(S_-^{m_1}(s^k)) =
S_-^{m_1m_2}(s^k)$.

For all $n \ge 3$, there is a non-bipartite ELC-preserved graph on $n$
vertices, namely the \emph{complete graph}, denoted $c^n$. This graph
has $n$ vertices, $v_1, v_2, \ldots, v_n$, of degree $n-1$.  Clearly
the graph is ELC-preserved, since for all edges $\{v_i, v_j\}$,
$N_{v_i} = N_{v_j}$, and hence the sets $A$ and $B$ in
Fig.~\ref{fig:elc} are empty.  The following more general construction
gives us more non-bipartite ELC-preserved graphs.

\begin{thm}[Clique expansion]\label{cexapand}
  Given an ELC-preserved graph $G$ on $k$ vertices and an integer $m >
  1$, we obtain an ELC-preserved non-bipartite graph $C^m(G)$ on
  $n=km$ vertices by substituting all vertices of $G$ with $c^m$.
\end{thm}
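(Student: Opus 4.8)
The plan is to establish two things: $C^m(G)$ is non-bipartite, and $C^m(G)$ is ELC-preserved. Non-bipartiteness is immediate. Since $m>1$, each vertex $w$ of $G$ is blown up into a clique $c^m$ of size at least two; if $\{w,w'\}\in E$ then these two cliques are completely joined in $C^m(G)$, so two copies of $w$ together with one copy of $w'$ span a triangle, and $C^m(G)$ contains an odd cycle. (When $m\ge 3$ there is already a triangle inside a single $c^m$, so no edge of $G$ is needed; the only degenerate exception is $G=e^1$ with $m=2$.)

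For ELC-preservation I would write $K_w=\{w_1,\ldots,w_m\}$ for the clique substituted for $w\in V(G)$, so that in $C^m(G)$ two cliques $K_w,K_{w'}$ are joined by all $m^2$ edges if $\{w,w'\}\in E$ and by none otherwise. Permuting the copies inside any one clique is an automorphism of $C^m(G)$, hence all edges inside a fixed clique are equivalent and all edges joining a fixed completely-joined pair of cliques are equivalent; it therefore suffices to treat (i) an edge $\{u_1,u_2\}\subseteq K_u$, and (ii) an edge $\{u_1,v_1\}$ with $u_1\in K_u$, $v_1\in K_v$, $\{u,v\}\in E(G)$. In case (i), $u_1$ and $u_2$ have identical neighbourhoods apart from each other, so the classes $A$ and $B$ of Definition~\ref{def:elc} are empty and ELC on $\{u_1,u_2\}$ merely interchanges the labels $u_1,u_2$; this is the argument already used for $c^n$.

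Case (ii) is the substance, and the goal there is to prove $C^m(G)^{(u_1,v_1)}\cong C^m(G^{(u,v)})$; the theorem then follows, since $G$ is ELC-preserved (so $G^{(u,v)}\cong G$) and substitution respects isomorphism (so $C^m(G^{(u,v)})\cong C^m(G)$). I would first express the four classes of Definition~\ref{def:elc} for $\{u_1,v_1\}$ in terms of the classes $A_G=N_u\setminus(N_v\cup\{v\})$, $B_G=N_v\setminus(N_u\cup\{u\})$, $C_G=N_u\cap N_v$, $D_G=V(G)\setminus(N_u\cup N_v\cup\{u,v\})$ of the edge $\{u,v\}$ in $G$, obtaining $A=\bigcup_{w\in A_G}K_w$, $B=\bigcup_{w\in B_G}K_w$, $D=\bigcup_{w\in D_G}K_w$, and $C=(K_u\setminus\{u_1\})\cup(K_v\setminus\{v_1\})\cup\bigcup_{w\in C_G}K_w$. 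Toggling all pairs between distinct classes among $A,B,C$ then acts at the clique level exactly as ELC on $\{u,v\}$ acts on $G$: for $w,w'$ in distinct classes among $A_G,B_G,C_G$ it switches the complete join between $K_w$ and $K_{w'}$ to a complete non-join and vice versa, mirroring the toggling of the edge $\{w,w'\}$, while the $D_G$-cliques are left untouched, just as in $G$. What remains is to chase the two ``split'' cliques $K_u$ and $K_v$ through the operation, keeping track of the three effects that have no direct analogue in $G$: the pivots $u_1,v_1$ are toggled with nothing, the class-$C$ remainders $K_u\setminus\{u_1\}$ and $K_v\setminus\{v_1\}$ are toggled against the $A_G$- and $B_G$-cliques, and finally $u_1$ and $v_1$ are relabelled. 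Carrying this out, one checks that every one of the $m$ vertices ending up in $K_u$ becomes adjacent to all of $K_v$, to the rest of $K_u$, and to exactly the cliques $K_w$ with $w\in B_G\cup C_G$ --- which is precisely the adjacency pattern of the clique over $u$ in $C^m(G^{(u,v)})$, since ELC on $\{u,v\}$ swaps $u$ and $v$, whence $u$ has neighbourhood $\{v\}\cup B_G\cup C_G$ in $G^{(u,v)}$ --- and symmetrically for $K_v$. This identifies $C^m(G)^{(u_1,v_1)}$ with $C^m(G^{(u,v)})$ up to isomorphism.

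The main obstacle is exactly this last step: verifying that the untouched pivots, the toggling of the class-$C$ remainders, and the label swap combine so that all $m$ copies in $K_u$ (and all $m$ in $K_v$) acquire the same, correct neighbourhood. Everything else reduces cleanly to the dictionary ``toggle a pair of cliques $\longleftrightarrow$ toggle the corresponding edge of $G$'' together with the hypothesis that $G$ is ELC-preserved.
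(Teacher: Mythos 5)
Your proof is correct and takes essentially the same route as the paper: the same reduction by symmetry to an edge inside one clique and an edge between two joined cliques, the same identification of the ELC classes of $\{u_1,v_1\}$ in terms of $A_G$, $B_G$, $C_G$ together with the split cliques $K_u\setminus\{u_1\}$ and $K_v\setminus\{v_1\}$, and the same bookkeeping showing the residual togglings plus the label swap amount to exchanging the roles of $K_u$ and $K_v$. Your packaging of the conclusion as $C^m(G)^{(u_1,v_1)}\cong C^m(G^{(u,v)})\cong C^m(G)$ is a slightly cleaner phrasing of the paper's statement that the total effect is to swap $u_r$ and $v_r$ for all $r$, and your explicit triangle argument covers the non-bipartiteness claim that the paper only addresses in the remark following the theorem (noting the same exception $C^2(e^1)=s^2$).
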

\begin{proof}
  Let $\{u,v\} \in E$. Let $u$ be substituted by $u_1, \ldots, u_m$,
  and let $v$ be substituted by $v_1, \ldots, v_m$.  ELC on any edge
  within a substituted subgraph, such as $\{u_i, u_j\}$, must preserve
  $C^m(G)$, since $N_{u_i} = N_{u_j}$.  Due to symmetries, it only
  remains to show that ELC on an edge $\{u_i, v_j\}$ preserves
  $C^m(G)$.  In the graph $G$, let $A = N_u^v$, $B = N_v^u$, and $C =
  N_u \cap N_v$.  In the graph $C^m(G)$, $N_{u_i}^{v_j} = A_1 \cup
  \cdots \cup A_m$, $N_{v_j}^{u_i} = B_1 \cup \cdots \cup B_m$, and
  $N_{u_i} \cap N_{v_j} = (C_1 \cup \cdots \cup C_m) \cup U \cup V$,
  where $U = \{u_1, \ldots, u_m\} \setminus \{u_i\}$ and $V = \{v_1,
  \ldots, v_m\} \setminus \{v_j\}$.  Let $X,Y \in \{A,B,C\}$, $X \ne
  Y$. All subgraphs in $C^m(G)$ induced on $X_r$ are isomorphic to
  subgraphs in $G$ induced on $X$.  A vertex $x_r \in X_r$ is
  connected to a vertex $y_s \in Y_s$ in $C^m(G)$, for $1 \le r,s \le
  m$ if and only if $x \in X$ is connected to $y \in Y$ in $G$.
  Hence, toggling pairs between $X_r$ and $Y_s$, for $1 \le r,s \le
  m$, preserves $C^m(G)$ since toggling pairs between $X$ and $Y$
  preserves $G$.  (The fact that edges have been added between $X_r$
  and $X_t$, for $1 \le r,t \le m$, by the clique substitution, has no
  effect on this, since the subgraphs in $C^m(G)$ induced on $X_r \cup
  X_t$ are isomorphic for all $1 \le r,t \le m$.)  The final effect of
  ELC on $\{u_i, v_j\}$ is to toggle all pairs between $U \cup V$ and
  $A_1 \cup \cdots \cup A_m$, and all pairs between $U \cup V$ and
  $B_1 \cup \cdots \cup B_m$.  But, since we also swap $u_i$ and
  $v_j$, the total effect is equivalent to swapping $u_r$ and $v_r$
  for all $1 \le r \le m$. It follows that $C^m(G)^{(u_i,v_j)}$ is
  isomorphic to $C^m(G)$, and hence that $C^m(G)$ is ELC-preserved.
\end{proof}

\begin{figure}
 \centering
 {\hfill
 \subfloat[The graph $C^2(s^3)$]{\includegraphics[width=.35\linewidth]{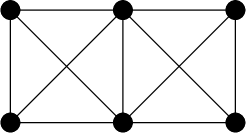}\label{fig:cliqueexample1}}
 \hfill
 \subfloat[The graph $C^2(S_-^2(s^3))$]{\includegraphics[width=.45\linewidth]{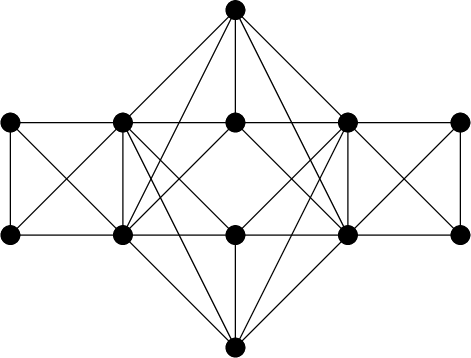}\label{fig:cliqueexample2}}
 \hfill}
 \caption{Examples of clique expansion}\label{fig:cliqueexample}
\end{figure}

Examples of graphs obtained by clique expansion are shown in
Fig.~\ref{fig:cliqueexample}. The output of a clique expansion will
always be a non-bipartite graph, except for the trivial case $C^2(e^1)
= s^2$. However, the input can be a bipartite graph, and hence the
construction can be combined with star expansion to obtain new
ELC-preserved graphs, such as the graph $C^2(S_-^2(s^3))$ of order 12,
shown in Fig.~\ref{fig:cliqueexample2}. Iterating clique expansion on
its own does not produce new graphs, since, trivially, $C^m(c^k) =
c^{mk}$ and $C^{m_2}(C^{m_1}(G)) = C^{m_1m_2}(G)$.

\begin{defn}\label{def:hammingcode}
  Let the graph $h^r$ be an $(r, 2^r-r-1)$-bipartite graph on $n =
  2^r-1$ vertices. To obtain $h^r$, let one partition, $U$, consist of
  $r$ vertices, and the other partition, $W$, be divided into $r-1$
  disjoint subsets, $W_i$, for $2 \le i \le r$, where $W_i$ contains
  $\binom{r}{i}$ vertices. Let each vertex in $W_i$ be connected to
  $i$ vertices in $U$, such that $N_a \ne N_b$ for all $a,b \in W$.
\end{defn}

\begin{thm}\label{thm:hammingcode}
  The graph $h^r$, for $r \ge 3$, is ELC-preserved and corresponds to
  the $[2^r-1, 2^r-r-1, 3]$ \emph{Hamming code}.
\end{thm}
\begin{proof}
  From the construction of the graph $h^r$, we see that it corresponds
  to a code with parity check matrix $(I \mid P)$, where the columns
  are all non-zero vectors from $\mathbb{F}_2^r$, which is the parity
  check matrix of a Hamming code~\cite{handbook}. We know from
  Theorem~\ref{thm:elccode} that any ELC operation on $h^r$ must give
  a graph that corresponds to an equivalent code. Since the distance
  of the code is greater than two, all columns of the parity check
  matrix must be distinct. It follows that all parity check matrices
  of equivalent codes must contain all non-zero vectors from
  $\mathbb{F}_2^r$, in some order. Hence the corresponding graphs are
  isomorphic, and $h^r$ must be ELC-preserved.
\end{proof}

A graph is \emph{even} if all its vertices have even degree, and
\emph{odd} if all its vertices have odd degree. (Connected even graphs
are also known as \emph{Eulerian graphs}.) An odd graph must have even
order, and is always the complement of an even graph. Odd graphs have
been shown to correspond to \emph{Type II} self-dual additive codes
over $\mathbb{F}_4$~\cite{classlc}.

\begin{lem}\label{lem:antieuler}
  Let $G=(V,E)$ be an odd graph. After performing any LC or ELC
  operation on $G$, we obtain a graph $G'$ which is also odd.
\end{lem}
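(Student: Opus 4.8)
The plan is to reduce everything to the single LC operation. By Definition~\ref{prop:triplelc}, ELC on an edge $\{u,v\}$ is just the composition $G*u*v*u$ of three local complementations, so once we know that LC on a single vertex sends odd graphs to odd graphs, the ELC case follows by applying this fact three times in a row. Hence the whole statement hinges on the claim: if $G=(V,E)$ is odd and $v\in V$, then $G*v$ is again odd.

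To prove that claim I would argue vertex by vertex. Local complementation on $v$ only toggles edges whose \emph{both} endpoints lie in $N_v$, and it never changes the neighborhood of $v$ itself. So any vertex $w\notin N_v$ (including $w=v$) has exactly the same neighborhood in $G*v$ as in $G$, and in particular its degree keeps the same parity. For a vertex $w\in N_v$, the only edges incident to $w$ that can be toggled are the edges joining $w$ to the other $|N_v|-1$ vertices of $N_v$. Writing $k$ for the number of these edges present in $G$, complementation replaces them by the $|N_v|-1-k$ that were absent, so $\deg_G(w)$ changes by $(|N_v|-1-k)-k=|N_v|-1-2k$. Since $G$ is odd, $|N_v|=\deg(v)$ is odd, whence $|N_v|-1-2k$ is even and the parity of $\deg(w)$ is unchanged. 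Thus every vertex of $G*v$ has odd degree.

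With the LC case established, the ELC case is immediate: $G^{(u,v)}=G*u*v*u$, each of the three intermediate graphs is odd by the above, and relabelling $u$ and $v$ (as in the equivalent formulation of Definition~\ref{def:elc}) changes no degree. Therefore $G^{(u,v)}$ is odd as well.

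I do not expect a genuine obstacle here; the only place that needs care is the bookkeeping for $w\in N_v$. One must keep in mind that LC leaves the edge $\{v,w\}$ and every edge from $w$ to a vertex outside $N_v$ untouched, which is precisely why the change in $\deg(w)$ is controlled by $|N_v|-1$ rather than $|N_v|$; it is the oddness of $\deg(v)$, via this $-1$, that makes the parity computation come out right.
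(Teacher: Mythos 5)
Your proof is correct and follows essentially the same route as the paper: reduce ELC to three successive LC operations via Definition~\ref{prop:triplelc}, then show LC on $v$ preserves oddness by a parity count at each neighbor $w\in N_v$ driven by the oddness of $\deg(v)$ (the paper phrases this as $|N_w'|=|N_w|+|N_v|-(2|N_w\cap N_v|+1)$ being a sum of three odd numbers, which is the same computation as your $|N_v|-1-2k$ being even). Your explicit remark that vertices outside $N_v\cup\{v\}$ are untouched is a point the paper leaves implicit, but the argument is the same.
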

\begin{proof}
  Let $v \in V$ and $w \in N_v$. LC on $v$ transforms $N_w$ into $N_w'
  = (N_w \cup N_v)\setminus(N_w \cap N_v)\setminus\{w\}$, where
  $\left|N_w'\right| = \left|N_w\right| + \left|N_v\right| -
  (2\left|N_w \cap N_v\right| + 1)$. Since $G$ is odd,
  $\left|N_w\right|$ and $\left|N_v\right|$ must be odd. We then see
  that $\left|N_w'\right|$ is the sum of three odd numbers, and must
  therefore be odd. The same argument holds for all neighbors of $v$,
  so $G * v$ is odd. That ELC also preserves oddness then follows from
  Definition~\ref{prop:triplelc}.
\end{proof}

\begin{defn}\label{def:exthammingcode}
  Let the graph $h^r_e$ be an $(r+1, 2^r-r-1)$-bipartite graph on $n =
  2^r$ vertices. To obtain $h^r_e$, first construct $h^r$, as in
  Definition~\ref{def:hammingcode}, and then add a new vertex which is
  connected by edges to all existing vertices of even degree.
\end{defn}

\begin{thm}\label{thm:exthammingcode}
  The graph $h^r_e$, for $r \ge 3$, is ELC-preserved and corresponds
  to the $[2^r, 2^r-r-1, 4]$ \emph{extended Hamming code}.
\end{thm}
\begin{proof}
  $h^r_e$ must be bipartite, since all vertices of $h^r$ in the
  partition of size $r$ have degree $\sum_{i=2}^r \binom{r}{i}
  \frac{i}{r} = 2^{r-1}-1$, which is odd.  The new vertex added to
  $h^r$ also has odd degree, since the number of vertices in $h^r$ of
  even degree is $\sum_{i=1}^{\lfloor\frac{r}{2}\rfloor} \binom{r}{2i}
  = 2^{r-1}-1$. Hence $h^r_e$ is odd. It follows from the construction
  that $h^r_e$ corresponds to a code with parity check matrix $(I \mid
  P)$, where the columns are all odd weight vectors from
  $\mathbb{F}_2^{r+1}$, which is the parity check matrix of an
  extended Hamming code~\cite{handbook}. We know from
  Theorem~\ref{thm:elccode} that any ELC operation on $h^r_e$ must
  give a graph that corresponds to an equivalent code. Since the
  distance of the code is greater than two, all columns of the parity
  check matrix must be distinct.  The graph $h^r_e$ is odd, and must
  remain so after ELC, according to Lemma~\ref{lem:antieuler}. It
  follows that all parity check matrices of equivalent codes must
  contain all odd weight vectors from $\mathbb{F}_2^{r+1}$, in some
  order. Hence the corresponding graphs are isomorphic, and $h^r_e$
  must be ELC-preserved.
\end{proof}

For $n=7$, we obtain from Theorem~\ref{thm:hammingcode} the bipartite
ELC-preserved graph $h^3$, shown in Fig.~\ref{fig:subh}, corresponding
to the Hamming code of length 7. This is an important graph, as it
forms the basis for the general constructions given by
Theorems~\ref{hexapand} and \ref{chexapand}. The graph $h^3_e$ is
shown in Fig.~\ref{fig:h3e}.

\begin{figure}
\centering
\subfloat[The graph $h^3$]
{\hspace{5pt}\includegraphics[height=80pt]{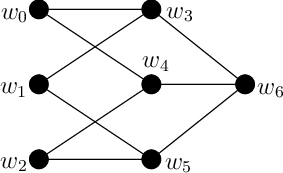}\hspace{5pt}\label{fig:subh}}
 \quad
\subfloat[The graph $h^3_e$]
{\hspace{5pt}\includegraphics[height=80pt]{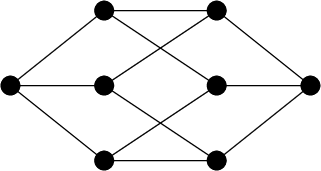}\hspace{5pt}\label{fig:h3e}}
\caption{ELC preserved graphs from Hamming codes}
\end{figure}

\begin{defn}\label{def:hexapand}
  Given a graph $G=(V,E)$ on $k$ vertices, let the \emph{Hamming
    expansion} $H(G)$ be a graph on $n=7k$ vertices constructed as
  follows.  For all vertices $v_i \in V$, $0 \le i < k$, we replace
  $v_i$ by the subgraph $h_i$ with vertices $\{w_{7i}, \ldots,
  w_{7i+6}\}$ and edges $\{\{w_{7i}, w_{7i+3}\}$, $ \{w_{7i},
  w_{7i+4}\}$, $ \{w_{7i+1}, w_{7i+3}\}$, $ \{w_{7i+1}, w_{7i+5}\}$, $
  \{w_{7i+2}, w_{7i+4}\}$, $ \{w_{7i+2}, w_{7i+5}\}$, $ \{w_{7i+3},
  w_{7i+6}\}$, $ \{w_{7i+4}, w_{7i+6}\}$, $ \{w_{7i+5},
  w_{7i+6}\}\}$. (Note that $h_i$ is a specific labeling of the graph
  $h^3$. The labeled graph $h_0$ is depicted in Fig.~\ref{fig:subh}.)
  If $\{v_i, v_j\} \in E$, we connect each of the vertices $w_{7i}$,
  $w_{7i+1}$, and $w_{7i+2}$ to all the vertices $w_{7j}$, $w_{7j+1}$,
  and $w_{7j+2}$. (Note that this differs from the graph substitution
  in Definition~\ref{def:subst}.)  As an example, consider the graph
  $H(s^2)$ shown in Fig.~\ref{fig:hexample1}.
\end{defn}

\begin{thm}[Hamming expansion]\label{hexapand}
  The graph $H(G)$ is ELC-preserved if $G$ is ELC-preserved.
\end{thm}
\begin{proof}
  Let $a=w_6$, $b=w_3$, and $c=w_0$. If $k>1$, let $d=w_7$, and assume
  (without loss of generality) that there is an edge $\{v_0, v_1\} \in
  E$.  Due to the symmetry of $H(G)$ and ELC-preservation of $G$, we
  only need to consider ELC on the three edges $\{a,b\}$, $\{b,c\}$,
  and $\{c,d\}$ to prove the ELC-preservation of $H(G)$.  That $h_0$
  is ELC-preserved, and hence that $\{a,b\}$ preserves $H(G)$ is
  easily verified by hand.  We then consider the edge $\{b,c\}$.  Note
  that $N_b^c = \{a, c'=w_1\}$, where $c'$ has exactly the same
  neighbors as $c$ outside $h_0$, and $a$ has no common neighbors with
  $c$ outside $h_0$.  Since we know that the subgraph $h_0$ is
  ELC-preserved, the effect of ELC on $\{b,c\}$ is simply to swap $a$
  and $c'$.  The edge $\{c,d\}$ corresponds to the edge $\{v_0, v_1\}
  \in E$.  In the graph $G$, let $A = N_{v_0}^{v_1}$, $B =
  N_{v_1}^{v_0}$, and $C = N_{v_0} \cap N_{v_1}$.  In the graph
  $H(G)$, $c$ is connected to three copies of $A$, $d$ is connected to
  three copies of $B$, and both $c$ and $d$ are connected to three
  copies of $C$.  Since ELC on $\{v_0, v_1\}$ preserves $G$, toggling
  pairs between these multiplied neighborhoods must preserve $H(G)$,
  as in Theorem~\ref{cexapand}.  There are only eight remaining
  vertices to consider: $c$ is connected to $D = \{w_3, w_4\}$ and $E
  = \{w_8, w_9\}$, and $d$ is connected to $F = \{w_{10}, w_{11}\}$
  and $G = \{w_1, w_2\}$.  The vertices in $D$ have no neighbors
  outside $h_0$, and the vertices in $F$ have no neighbors outside
  $h_1$. The vertices in $E$ share the same neighbors as $d$ outside
  $h_1$, and the vertices in $G$ share the same neighbors as $c$
  outside $h_0$.  The effect of ELC on $\{c,d\}$ is to swap $D$ with
  $E$ and $F$ with $G$. Hence $H(G)$ must be preserved, except for the
  local structure of $h_0$ and $h_1$, which it remains to check.  ELC
  on $\{c,d\}$ has the effect of toggling pairs between $D$ and $G$
  and between $E$ and $F$.  Finally we swap $u$ and $v$. The result is
  that the structure of $h_0$ and $h_1$ is preserved, as illustrated
  in Fig.~\ref{fig:hexample1} and Fig.~\ref{fig:hexample2}. It follows
  that $H(G)$ is ELC-preserved.
\end{proof}

\begin{figure}
\centering
\includegraphics[height=100pt]{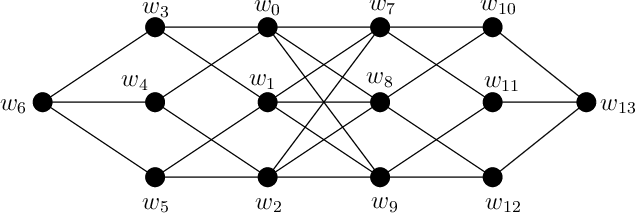}
\caption{The graph $H(s^2)$}\label{fig:hexample1}
\end{figure}

\begin{figure}
\centering
\includegraphics[height=100pt]{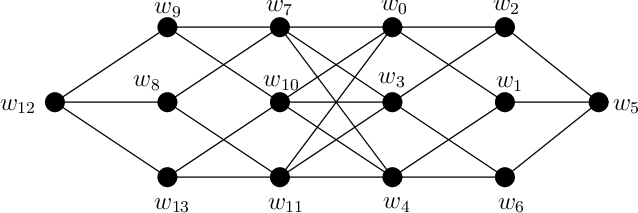}
\caption{The graph $H(s^2)^{(w_0,w_7)}$}\label{fig:hexample2}
\end{figure}

\begin{thm}[Hamming clique expansion]\label{chexapand}
  For $k\ge 1$ and $m \ge 1$, we obtain an ELC-preserved graph $H_k^m$
  on $n=7k+m$ vertices by taking the union of $G = H(c^k)$ and $K =
  c^m$. We add edges from each vertex in $K$ to all the $3k$ vertices
  in $G$ labeled (as in Theorem~\ref{hexapand}) $w_{7i}$, $w_{7i+1}$,
  and $w_{7i+2}$, for $0 \le i < k$. (Note that $H_1^1 = h^3_e$.) As
  an example, the graph $H_3^3$ is shown in Fig.~\ref{fig:hexpex}.
\end{thm}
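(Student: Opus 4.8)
The plan is to mimic the proof of Theorem~\ref{hexapand}, regarding $H_k^m$ as the Hamming expansion $H(c^k)$ — which is ELC-preserved since $c^k$ is — with the clique $K=c^m$ glued onto the $3k$ special vertices $w_{7i},w_{7i+1},w_{7i+2}$. First I would record the obvious automorphisms of $H_k^m$: permuting the $m$ vertices of $K$, permuting the $k$ blocks $h_i$, and, on each $h_i$, the action of $\Aut(h^3)$ (which fixes the special triple $\{w_{7i},w_{7i+1},w_{7i+2}\}$ setwise and permutes the three degree-two vertices of the block). Using these, every edge of $H_k^m$ is equivalent to one of five representatives: writing $a=w_6$, $b=w_3$, $c=w_0$, $d=w_7$ and fixing $p\in K$, the edges $\{a,b\}$ and $\{b,c\}$ inside $h_0$, the cross-block edge $\{c,d\}$ (present when $k\ge 2$), an edge $\{p,p'\}$ inside $K$ (present when $m\ge 2$), and the edge $\{c,p\}$. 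It therefore suffices to check that ELC on each of these five edges maps $H_k^m$ to an isomorphic graph.

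For $\{a,b\}$ and $\{b,c\}$ the reasoning is that of Theorem~\ref{hexapand}: both edges lie inside $h_0$, and since $c=w_0$ and $c'=w_1$ have identical neighbourhoods outside $h_0$ — which in $H_k^m$ includes identical attachments to all of $K$ — the effect of ELC is, as there, just an internal relabelling of $h_0$ (absorbed, for $\{a,b\}$, by the ELC-preservation of $h^3$) or the interchange of $a$ with $c'$ (for $\{b,c\}$), carrying the outside attachments, including those to $K$, along with it. For $\{p,p'\}$ inside $K$: since $K$ is a clique and all of its vertices have the same neighbourhood outside $K$, $N_p=N_{p'}$, so the classes $A$ and $B$ of Fig.~\ref{fig:elc} are empty and ELC on $\{p,p'\}$ only interchanges the two labels, exactly as for edges inside a clique in the proof of Theorem~\ref{cexapand}. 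For the cross-block edge $\{c,d\}=\{w_0,w_7\}$ one repeats the final part of the proof of Theorem~\ref{hexapand}, noting that $K$ now lies in the common-neighbour class $C$ alongside the multiplied copies of $N_{v_0}\cap N_{v_1}$; toggling $C$ against $A$ and $B$ rewires $h_0$ and $h_1$ into fresh copies of $h^3$ exactly as there, while toggling $K$ against $A$ and $B$ switches which of the two new triples of $h_0$ (resp.\ $h_1$) is the one joined to $K$ — consistently with the way the cross-block $K_{3,3}$'s are switched — so after swapping $w_0$ and $w_7$ the image is again a Hamming clique expansion, isomorphic to $H_k^m$.

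The one genuinely new case is $\{c,p\}=\{w_0,p\}$, which is also where the extended-Hamming structure behind $H_1^1=h^3_e$ resurfaces. Here one computes the ELC classes $A=\{w_1,w_2\}$, $B=\{w_3,w_4\}$, $C=(K\setminus\{p\})\cup\{w_{7j+s}:j\neq 0\}$ (with $D$ the remaining vertices), performs the toggles, and swaps $p$ with $w_0$. One observes: restricted to $h_0$ the toggles interchange $w_3$ and $w_4$ (an automorphism of $h^3$); $w_1,w_2$ lose all their edges to $K\setminus\{p\}$ and to the other blocks' triples, while $w_3,w_4$ gain exactly those edges; and on $K\setminus\{p\}$ and on the other blocks' triples the attachment to $\{w_1,w_2\}$ is replaced by one to $\{w_3,w_4\}$. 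After the label swap, the vertex formerly called $w_0$ joins $K\setminus\{p\}$ into a clique on $m$ vertices, while the vertex formerly called $p$ becomes a degree-two vertex of the $h_0$-block; that vertex together with $w_3,w_4$ — the three degree-two vertices of the resulting copy of $h^3$ — are precisely the vertices joined to the new clique and to the other blocks, so they form a legitimate special triple, $w_1,w_2$ become ordinary interior vertices, and one verifies that all cross-block $K_{3,3}$'s and $K$-attachments are correct. Hence the image is isomorphic to $H_k^m$; for $k=m=1$ this collapses to the verification that $(h^3_e)^{(w_0,p)}\cong h^3_e$.

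The main obstacle is the bookkeeping in this last case (and, in parallel, for $\{c,d\}$): one must check that after the toggles and the label swap the new special triple of $h_0$, the new interior vertices, the cross-block edges, and the attachment to $K$ are all simultaneously brought into the prescribed form by a single relabelling — that is, that the image is literally a Hamming clique expansion rather than merely a graph with the same invariants. Everything else is a routine check once the five edge orbits are listed and the passive-versus-active role of $K$ along each of them is pinned down.
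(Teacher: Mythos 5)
Your proposal is correct and takes essentially the same route as the paper's proof: the same reduction by symmetry to five edge representatives ($\{a,b\}$, $\{b,c\}$, the cross-block edge $\{c,d\}$, a clique edge of $K$, and the block--clique edge $\{c,p\}$), with the first three deferred to the argument of Theorem~\ref{hexapand} (noting that $K$ only enlarges the common-neighbour class), the clique edge trivial since its endpoints have equal neighbourhoods, and the edge $\{c,p\}$ settled by the observation that $N_c^p=\{w_3,w_4\}$ and $N_p^c=\{w_1,w_2\}$ while all other neighbours are common, the former pair joined to none and the latter to all of them. Your explicit reassembly of the new special triple in that last case is just a more detailed version of the paper's ``swap $N_c^e$ with $N_e^c$'' conclusion; the only small slip is the parenthetical claim that interchanging $w_3$ and $w_4$ alone is an automorphism of $h^3$ (it is not, but your subsequent global verification makes this harmless).
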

\begin{proof}
  Without loss of generality, let $a=w_6$, $b=w_3$, $c=w_0$, $d=w_7$,
  and let $e$ and $f$ be two distinct vertices in $K$.  (For $k=1$,
  ignore $d$, and for $m=1$, ignore $f$.)  Due to the symmetry of
  $H_k^m$, we only need to consider ELC on the five edges $\{a,b\}$,
  $\{b,c\}$, $\{c,d\}$, $\{c,e\}$, and $\{e,f\}$ to prove the
  ELC-preservation of $H_k^m$.  The proof for $\{a,b\}$, $\{b,c\}$,
  and $\{c,d\}$ are the same as in Theorem~\ref{hexapand}.  (The proof
  still works with $K=c^m$ added to $N_c$ and $N_d$.)  The edge
  $\{e,f\}$ is trivial, since $N_e = N_f$.  It only remains to show
  that ELC on $\{c,e\}$ preserves $H_k^m$.  Observe that $N_c^e =
  \{w_3, w_4\}$ and $N_e^c = \{w_1, w_2\}$.  All other neighbors of
  $c$ and $e$ are in $N_c \cap N_e$, since the underlying graph of
  $G=H(c^k)$ is a complete graph.  Furthermore, $w_1$ and $w_2$ are
  connected to all vertices in $N_c \cap N_e$, and $w_3$ and $w_4$ are
  not connected to any vertex in $N_c \cap N_e$.  The effect of ELC is
  to swap the vertices in $N_c^e$ with the vertices in $N_e^c$.  $h_0$
  is preserved as before.  It follows that $H_k^m$ is ELC-preserved.
\end{proof}

\begin{figure}
\centering
\includegraphics[height=175pt]{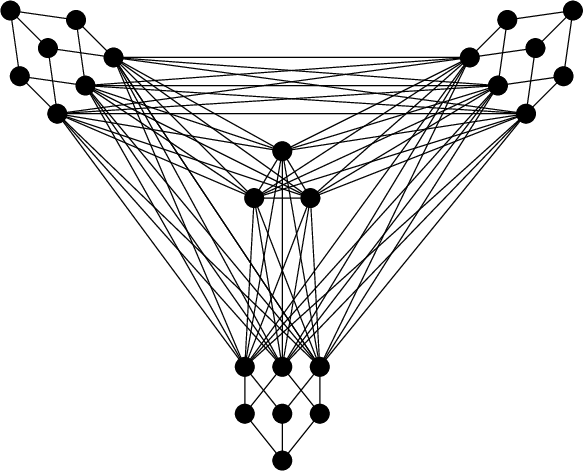}
\caption{The graph $H_3^3$}\label{fig:hexpex}
\end{figure}

\begin{prop}
  $H(G)$ is bipartite when $G=(V,E)$ is bipartite. $H_k^m$ is
  bipartite only in the trivial case where $k=m=1$.
\end{prop}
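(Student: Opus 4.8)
The plan is to treat the two assertions separately. For the first assertion, I would show that the Hamming expansion $H(G)$ is bipartite whenever $G$ is bipartite by producing an explicit $2$-coloring. Since $G$ is bipartite, fix a partition $V = V_0 \cup V_1$ of its vertex set with no edges inside either part. Each vertex $v_i \in V$ is replaced by a copy $h_i$ of the graph $h^3$, which is itself bipartite (by Theorem~\ref{thm:hammingcode} it is $(3,4)$-bipartite); write its partition as $\{w_{7i}, w_{7i+1}, w_{7i+2}\} \cup \{w_{7i+3}, \ldots, w_{7i+6}\}$, the first part being the three ``attachment'' vertices. The inter-copy edges of $H(G)$ only ever connect attachment vertices of $h_i$ to attachment vertices of $h_j$ when $\{v_i,v_j\} \in E$. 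So I would color $H(G)$ as follows: if $v_i \in V_0$, give the attachment triple of $h_i$ color $0$ and the other four vertices color $1$; if $v_i \in V_1$, reverse this. Within each $h_i$ the coloring is proper because $h^3$ is bipartite with exactly that partition. Across copies, an inter-copy edge joins an attachment vertex of $h_i$ to an attachment vertex of $h_j$ with $v_i, v_j$ in opposite parts of $G$, hence these two attachment vertices receive opposite colors. Therefore $H(G)$ is properly $2$-colored, i.e.\ bipartite.

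For the second assertion I need both directions. The easy direction is $k=m=1$: then $H_1^1 = h^3_e$, which is $(4, 2^3-3-1)=(4,3)$-bipartite by Theorem~\ref{thm:exthammingcode}, so this case is indeed bipartite. For the converse I must show $H_k^m$ is non-bipartite whenever $k \ge 1$ and $m \ge 1$ with $(k,m)\ne(1,1)$. I would split into two cases. If $k \ge 2$: the underlying graph on the $k$ copies is $c^k$, so two copies $h_0, h_1$ have their attachment triples completely joined to each other; in particular $w_0$ is adjacent to $w_7$ and $w_8$, while $w_7$ is adjacent to $w_8$ (they lie in different copies but are both attachment vertices of $h_1$ joined to attachment vertices of $h_0$)--wait, more carefully, $w_7$ and $w_8$ are in the same copy $h_1$ and are \emph{not} adjacent inside $h^3$. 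Instead I would exhibit a triangle using one vertex of $K$: take $v \in K$ (there is one since $m \ge 1$); $v$ is joined to all attachment vertices, so $v$ is adjacent to both $w_0$ and $w_7$, and $w_0$ is adjacent to $w_7$ (attachment vertex of $h_0$ joined to attachment vertex of $h_1$ since the copies form $c^k$ with $k\ge 2$). That is a triangle, so $H_k^m$ is non-bipartite. If $k = 1$ but $m \ge 2$: then $K = c^m$ contains a triangle on any three of its vertices (since $m \ge 3$)---but $m$ could be exactly $2$. For $k=1, m=2$: $K = c^2$ is a single edge $\{e,f\}$, both joined to the three attachment vertices $w_0, w_1, w_2$ of $h_0$; then $e, f, w_0$ form a triangle since $e \sim f$, $e \sim w_0$, $f \sim w_0$. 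So in every case with $(k,m) \ne (1,1)$ we find an odd cycle.

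The main obstacle, and the only place requiring care, is bookkeeping the adjacency relations of $H_k^m$ correctly---in particular remembering that inside a single copy $h_i$ the three attachment vertices $w_{7i}, w_{7i+1}, w_{7i+2}$ form an independent set in $h^3$, so a triangle must use either two vertices from $K$, or two attachment vertices from \emph{different} copies together with one more vertex adjacent to both. Once the case split ($k \ge 2$ versus $k=1$, and within the latter $m \ge 2$) is organized around locating such a triangle, the argument is routine. I would also remark that the converse can alternatively be deduced from the degree parity computation in Theorem~\ref{thm:exthammingcode}: $H_1^1 = h^3_e$ is odd, and an odd graph of order $\ge 2$ other than a single edge is generally non-bipartite---but the direct triangle argument above is cleaner and self-contained, so I would present that.
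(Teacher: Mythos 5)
The one genuine flaw is in your first paragraph: the bipartition of $h^3$ you write down is wrong, and with it your $2$-coloring is not proper. In the labeling of Theorem~\ref{hexapand}, the copy $h_i$ contains the edges $\{w_{7i+3}, w_{7i+6}\}$, $\{w_{7i+4}, w_{7i+6}\}$, $\{w_{7i+5}, w_{7i+6}\}$, so $\{w_{7i+3},\ldots,w_{7i+6}\}$ is not an independent set; the correct bipartition of $h_i$ is $\{w_{7i}, w_{7i+1}, w_{7i+2}, w_{7i+6}\}$ versus $\{w_{7i+3}, w_{7i+4}, w_{7i+5}\}$ (the partition of size $r=3$ in Theorem~\ref{thm:hammingcode} is played by $w_{7i+3}, w_{7i+4}, w_{7i+5}$, and the attachment triple sits in the size-$4$ partition together with $w_{7i+6}$). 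As you have written it, the three edges incident to $w_{7i+6}$ inside each copy are monochromatic. The repair is immediate: put $w_{7i+6}$ in the same color class as the attachment triple $\{w_{7i}, w_{7i+1}, w_{7i+2}\}$, i.e.\ color class $0$ or $1$ according to whether $v_i \in V_0$ or $V_1$. Since all inter-copy edges join attachment vertices of $h_i$ to attachment vertices of $h_j$ only when $\{v_i,v_j\} \in E$, the corrected coloring is proper, and this is essentially the paper's own argument, which observes that each copy is bipartite and that the inter-copy edges form complete bipartite graphs between attachment triples of copies lying in opposite parts of $G$.

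Your treatment of the second assertion is correct and in fact more explicit than the paper's: the paper only exhibits a $3$-clique when $k>2$ or $m>2$ and dismisses the cases $k,m \in \{1,2\}$ as easily checked, whereas your triangles $\{v, w_0, w_7\}$ for $k \ge 2$ (with $v \in K$) and $\{e, f, w_0\}$ for $k=1$, $m \ge 2$ cover every case other than $(k,m)=(1,1)$ uniformly. Two trivial slips: $h^3_e$ is $(4,4)$-bipartite, since $2^3-3-1=4$, not $(4,3)$; and the self-correcting digression about $w_7$ and $w_8$ should simply be deleted, since, as you note, they lie in the same copy and are not adjacent.
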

\begin{proof}
  Let $V = \{v_0, \ldots v_{k-1}\}$. In $H(G)$, each $v_i$ is replaced
  by a bipartite subgraph, $h_i$, and edges are added between these
  subgraphs, such that the induced subgraph on $\{w_{7i},$ $w_{7i+1},$
  $w_{7i+2},$ $w_{7j},$ $w_{7j+1},$ $w_{7j+2}\}$ in $H(G)$ is a
  complete bipartite graph if there is an edge $\{v_i, v_j\} \in E$
  and an empty graph otherwise.  It follows that $H(G)$ is bipartite
  whenever $G$ is bipartite. (The trivial case $H(e^1) = h^3$ is
  clearly also bipartite.)  $H_k^m$ is clearly non-bipartite if $k>2$
  or $m>2$, since it contains a 3-clique.  It is easily checked that
  for the remaining cases, only $H_1^1 = h^3_e$ is bipartite.
\end{proof}

\section{Classification}\label{sec:class}

Tables~\ref{tab:class1} and \ref{tab:class2} show how all bipartite
ELC-preserved graphs of order $n \le 16$, and all non-bipartite
ELC-preserved graphs of order $n \le 12$ arise from the constructions
described in the previous section.

\begin{table}
\centering
\caption{Classification of bipartite ELC-preserved graphs}\label{tab:class1}
\begin{tabular}{rl}
\toprule
$n$ & \\
\midrule
2  & $s^{2}$ \\
3  & $s^{3}$ \\
4  & $s^{4}$  \\
5  & $s^{5}$  \\
6  & $s^{6}$, $S_-^2(s^3)$  \\
7  & $s^{7}$, $h^3$  \\
8  & $s^{8}$, $S_-^2(s^4)$, $h^3_e$  \\
9  & $s^{9}$, $S_-^3(s^3)$  \\
10 & $s^{10}$, $S_-^2(s^5)$  \\
11 & $s^{11}$  \\
12 & $s^{12}$, $S_-^2(s^6)$, $S_-^3(s^4)$, $S_-^4(s^3)$, $S_-^2(S_-^2(s^3))$  \\
13 & $s^{13}$  \\
14 & $s^{14}$, $S_-^2(s^7)$, $S_-^2(h^3)$, $S_+^2(h^3)$, $H(s^2)$ \\
15 & $s^{15}$, $S_-^3(s^5)$, $S_-^5(s^3)$, $h^4$ \\
16 & $s^{16}$, $S_-^2(s^8)$, $S_-^4(s^4)$, $S_-^2(S_-^2(s^4))$, $S^2(h^3_e)$, $h^4_e$ \\
\bottomrule
\end{tabular}
\end{table}

\begin{table}
\centering
\caption{Classification of non-bipartite ELC-preserved graphs}\label{tab:class2}
\begin{tabular}{rl}
\toprule
$n$ & \\
\midrule
3  & $c^3$ \\
4  & $c^4$ \\
5  & $c^5$ \\
6  & $c^6$, $C^2(s^3)$ \\
7  & $c^7$ \\
8  & $c^8$, $C^2(s^4)$ \\
9  & $c^9$, $C^3(s^3)$, $H_1^2$ \\
10 & $c^{10}$, $C^2(s^5)$, $H_1^3$ \\
11 & $c^{11}$, $H_1^4$ \\
12 & $c^{12}$, $C^2(s^6)$, $C^3(s^4)$, $C^4(s^3)$, $C^2(S_-^2(s^3))$, $H_1^5$\\
\bottomrule
\end{tabular}
\end{table}

We observe that certain pairs of ELC-preserved graphs are
LC-equivalent. It is easy to verify that $c^n$ and $s^n$ form a
complete LC orbit, for all $n\ge 3$.  The following theorem explains
all the remaining pairs of LC-equivalent ELC-preserved graphs for $n
\le 12$, namely $\{S_-^2(s^4), C^2(s^4)\}$, $\{S_-^2(s^6),
C^2(s^6)\}$, $\{S_-^3(s^4), C^3(s^4)\}$, and $\{S_-^2(S_-^2(s^3)),
C^2(S_-^2(s^3))\}$. (Note that all these pairs of graphs are part of
larger LC orbits whose other members are not ELC-preserved.)

\begin{thm}
  Let $G = (U \cup W,E)$ be a $(r,n-r)$-bipartite graph with
  partitions $U=\{u_1, \ldots, u_r\}$ and $W=\{w_1, \ldots,
  w_{n-r}\}$. Let $S^m(G)$ be the graph where the vertices in $U$ are
  substituted with $e^m$. If all vertices in $U$ have odd degree, and
  all pairs of vertices from $U$ have an even number of (or zero)
  common neighbors, then $C^m(G) = S^m(G)*w_1* \cdots *w_{n-r}$, i.e.,
  we can get from $S^m(G)$ to $C^m(G)$ by performing LC on all
  vertices in $W$. (The order of the LC operations is not important.)
\end{thm}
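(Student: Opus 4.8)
The plan is to analyze the effect of performing local complementation at every vertex of $W$ on the expanded graph $S^m(G)$ and show that the resulting graph is exactly $C^m(G)$. First I would fix notation: in $S^m(G)$, each vertex $u_i \in U$ has been blown up into an empty set $\tilde U_i = \{u_{i,1},\dots,u_{i,m}\}$ of $m$ mutually non-adjacent vertices, while each vertex $w_j \in W$ has had $m-1$ pendants $p_{j,1},\dots,p_{j,m-1}$ attached; call $\tilde W_j = \{w_j, p_{j,1},\dots,p_{j,m-1}\}$. The adjacency in $S^m(G)$ is: $u_{i,s} \sim w_j$ (for all $s$) iff $u_i \sim w_j$ in $G$, and $w_j \sim p_{j,t}$ for all $t$, with no other edges. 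The claim is that applying $*w_1 * w_2 * \cdots * w_{n-r}$ turns this into the graph where each $\tilde U_i$ and each $\tilde W_j$ is a clique $c^m$, and $u_{i,s}\sim w_{j,s'}$ iff $u_i \sim w_j$ — i.e.\ $C^m(G)$ with the obvious relabelling of the pendants $p_{j,t}$ as the non-$w_j$ clique-vertices of $\tilde W_j$.

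The key computation is to track, for each unordered pair of vertices, how many of the $w_j$'s flip the edge between them; the edge is present in the final graph iff (initial state) XOR (number of flips) is odd. I would organize this by cases according to which blown-up sets the two vertices lie in. (i) A pair inside $\tilde W_j$: $w_j$ is adjacent to all of $p_{j,1},\dots,p_{j,m-1}$, so $*w_j$ complements the induced graph on these pendants (which starts empty), making $\tilde W_j \setminus \{w_j\}$ a clique; no other $w_\ell$ touches these pairs since the pendants have $w_j$ as their only neighbour — and edges incident to $w_j$ itself are never changed by LC at $w_j$, nor by LC at any $w_\ell \ne w_j$ (as $w_j \not\sim w_\ell$). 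So $\tilde W_j$ becomes a clique, as required. (ii) A pair $\{u_{i,s}, u_{i',s'}\}$ inside or across blown-up $U$-sets: the pair gets flipped once for each $w_j$ adjacent to both $u_{i,s}$ and $u_{i',s'}$, i.e.\ for each common neighbour of $u_i$ and $u_{i'}$ in $G$ (when $i \ne i'$) or each neighbour of $u_i$ in $G$ (when $i=i'$). The hypothesis "all vertices in $U$ have odd degree" makes the $i=i'$ count odd, so every $\tilde U_i$ becomes a clique; the hypothesis "every pair in $U$ has an even number of common neighbours" makes the $i\ne i'$ count even, so no edges appear between distinct $\tilde U_i,\tilde U_{i'}$ — matching $C^m(G)$, whose underlying graph on the $U$-side is edgeless except within each clique. (iii) A pair $\{u_{i,s}, w_j\}$ or $\{u_{i,s}, p_{j,t}\}$: for the edge $\{u_{i,s}, w_j\}$, LC at $w_j$ never changes it, and LC at $w_\ell$ ($\ell \ne j$) changes it iff $u_{i,s} \sim w_\ell$ and $w_j \sim w_\ell$, but $w_j \not\sim w_\ell$, so it is never changed — it stays equal to $[u_i \sim w_j]$, correct. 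For $\{u_{i,s}, p_{j,t}\}$ (initially a non-edge): it is flipped by $*w_\ell$ iff $u_{i,s}\sim w_\ell$ and $p_{j,t}\sim w_\ell$, i.e.\ iff $\ell = j$ and $u_i \sim w_j$; so the final edge is $[u_i\sim w_j]$, exactly what $C^m(G)$ demands of the pendant $p_{j,t}$ (re-labelled as a clique-vertex of $\tilde W_j$). (iv) A pair $\{p_{j,t}, p_{\ell,t'}\}$ with $j \ne \ell$, or $\{p_{j,t}, w_\ell\}$ with $\ell \ne j$: these are flipped by $*w_a$ only if $w_a$ is adjacent to both endpoints, which never happens since each $p_{j,t}$ has $w_j$ as its sole neighbour and $w_j \not\sim w_\ell$; so they remain non-edges — again matching $C^m(G)$.

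Collecting the four cases shows the graph obtained is precisely $C^m(G)$ up to the relabelling of pendants, proving the identity. To finish I would remark on order-independence: since no two vertices of $W$ are adjacent in $S^m(G)$, and LC at a vertex $v$ only alters edges within $N_v$ — in particular it never alters the neighbourhood of a non-neighbour of $v$ — the operations $*w_1,\dots,*w_{n-r}$ commute, so the sequence order is immaterial; alternatively this follows from the standard fact that LC at non-adjacent vertices commutes. The main obstacle, and the place requiring genuine care rather than bookkeeping, is case (ii): one must correctly identify that the number of $w_j \in W$ adjacent to a given $u_{i,s}$ equals $\deg_G(u_i)$ and that the number adjacent to both $u_{i,s}$ and $u_{i',s'}$ equals $|N_{u_i}\cap N_{u_{i'}}|$ in $G$ (independent of the copy indices $s,s'$, since the blow-up is "uniform" on the $U$-side), and then invoke exactly the two parity hypotheses of the theorem; everything else is a routine check that unchanged pairs stay unchanged.
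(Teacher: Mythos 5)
Your proof is correct and follows essentially the same route as the paper's: both arguments reduce to counting, for each affected pair, how many of the LC operations at the (pairwise non-adjacent, hence neighborhood-invariant) vertices of $W$ toggle it, and then invoking the odd-degree and even-common-neighbor hypotheses. Your per-pair case analysis is just a more systematic bookkeeping of the paper's blob-by-blob narrative, and your explicit remark that the $W$-neighborhoods never change (hence the toggles and their order-independence are well defined) makes precise a point the paper leaves implicit.
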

\begin{proof}
  Consider performing LC on a vertex $w_i$ in $S^m(G)$. This vertex
  will be connected to the set $X$ of $m-1$ pendant vertices, and to
  $km$ other vertices, where $k$ is the degree of $w_i$ in $G$. Let
  $u$ be a neighbor of $w_i$ in $G$, and let $Y$ be the set of $m$
  vertices that $u$ is replaced with in $S^m(G)$.  The subgraph
  induced on $Y$ is $e^m$. After LC on $w_i$, the induced subgraph on
  $Y$ will be $c^m$.  Moreover, the induced subgraph on $X \cup
  \{w_i\}$ will also be $c^m$, and all vertices in $Y$ will be
  connected to all vertices of $X \cup \{w_i\}$. Subsequent LC on
  another vertex $w_j$, where $w_j$ is also connected to $u$ in $G$,
  will change the subgraph induced on $Y$ back to $e^m$. To ensure
  that the induced subgraph on $Y$ is $c^m$ in the final graph, we
  must require $u$ to have odd degree in $G$.  If $w_i$ is also
  connected to another vertex $u'$ in $G$, which is replaced by $Y'$
  in $S^m(G)$, LC on $w_i$ will connect all vertices in $Y$ to all
  vertices in $Y'$. Since we require that $u$ and $u'$ share an even
  number of neighbors, none of these edges will remain in the final
  graph.  With these considerations, it follows that after performing
  LC on all vertices in $W$, we obtain a graph where every vertex of
  $G$ is substituted by $c^m$, which is the definition of $C^m(G)$.
\end{proof}

New non-bipartite ELC-preserved graphs, $h^r_*$, of order $n = 2^r$
for $r \ge 4$, can be obtained from the following theorem, by applying
the given LC operations to ELC-preserved bipartite graphs
corresponding to extended Hamming codes, $h^r_e$.  The smallest
example of this, $h^4_*$, is shown in Fig.~\ref{fig:hstar2}.  (Note
that $h^3_* = h^3_e$. For $r \ge 4$, $h^r_*$ is a non-bipartite
ELC-preserved graph that cannot be obtained from any of our other
constructions.)

\begin{figure}
 \centering
 \subfloat[The Graph $h^4_e$]
 {\hspace{5pt}\includegraphics[width=.32\linewidth]{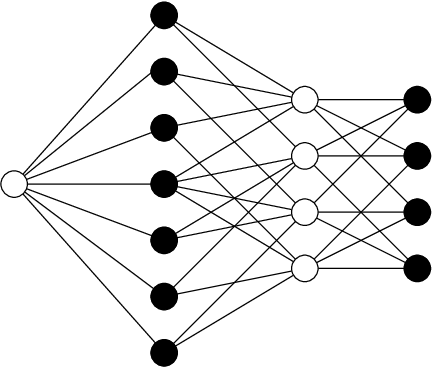}\hspace{5pt}\label{fig:hstar1}}
 \quad
 \subfloat[The Graph $h^4_*$]
 {\hspace{5pt}\includegraphics[width=.52\linewidth]{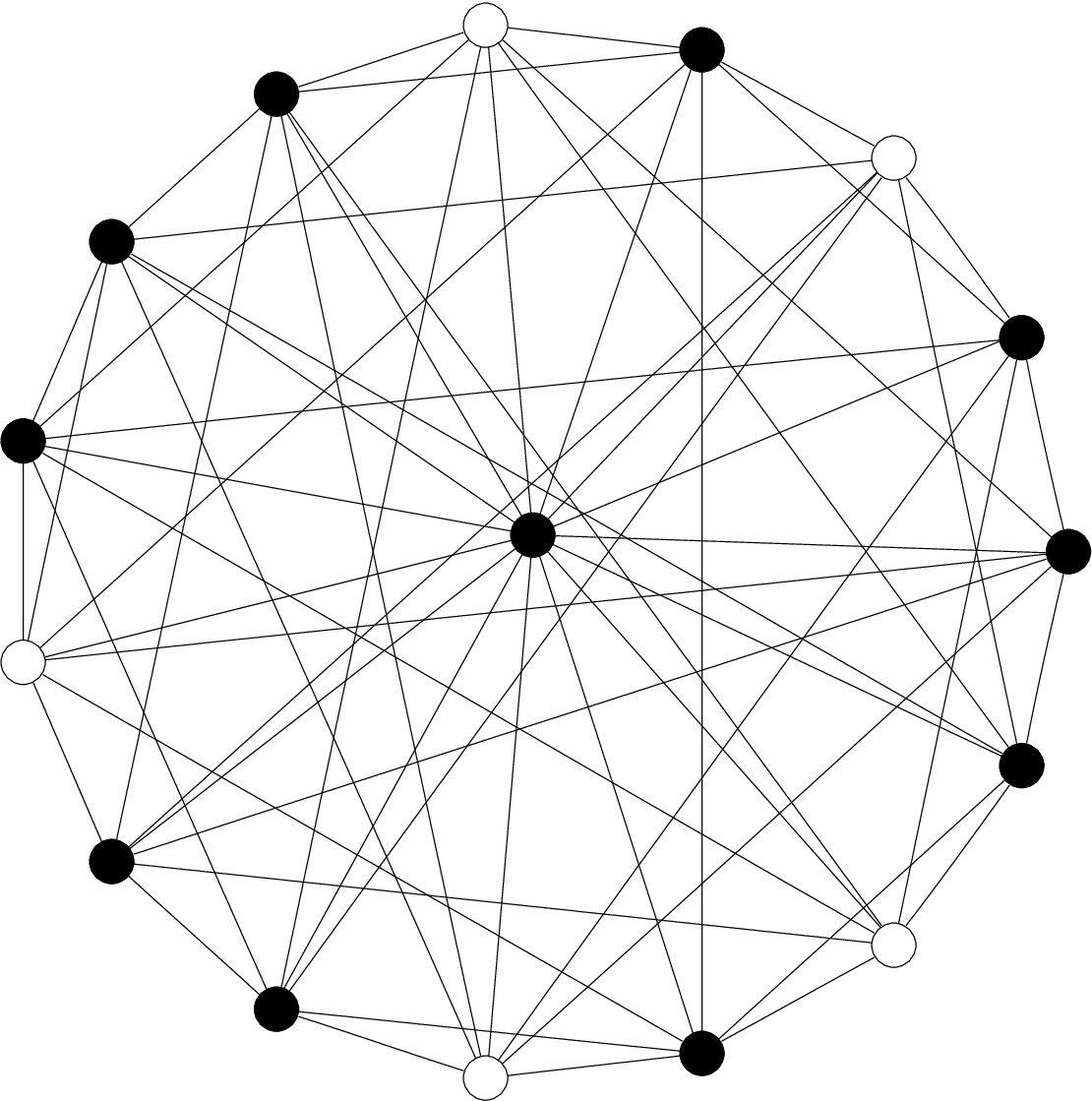}\hspace{5pt}\label{fig:hstar2}}
 \caption{Example of new ELC-preserved graph obtained by LC}\label{fig:hstar}
\end{figure}

\begin{thm}\label{thm:lchamming}
  Given the bipartite ELC-preserved graph $h^r_e$, defined in
  Definition~\ref{def:exthammingcode}, then $2^r-r-1$ LC operations applied, 
  in any order, one to each vertex in the partition of size $2^r-r-1$, 
  preserve the graph, while $r+1$ LC operations applied, in any order, 
  one to each vertex in the partition of size $r+1$ gives an ELC-preserved graph
  $h^r_*$ which is non-bipartite when $r \ge 4$.
\end{thm}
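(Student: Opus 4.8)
The plan is to analyze the effect of the two families of LC operations separately, exploiting the code-theoretic description of $h^r_e$ and the structure results already established. First I would set up notation: write $U$ for the partition of size $r+1$ and $W$ for the partition of size $2^r-r-1$, so that the adjacency matrix has the block form of Definition~\ref{def:code} with $P$ the matrix whose columns are exactly the odd-weight vectors of $\mathbb{F}_2^{r+1}$ (equivalently, $h^r_e$ corresponds to the extended Hamming code, as in Theorem~\ref{thm:exthammingcode}). Recall that $h^r_e$ is odd, by the computation in the proof of Theorem~\ref{thm:exthammingcode}; this will be used repeatedly, together with Lemma~\ref{lem:antieuler}.

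For the partition $W$: I would show that performing LC on all vertices of $W$, in any order, returns $h^r_e$ up to isomorphism. The key step is a local computation. When we perform LC on a vertex $w \in W$ with neighborhood $N_w \subseteq U$, the induced subgraph on $N_w$ gets complemented, which since $h^r_e$ is bipartite means edges are \emph{added} among the vertices of $U$; a subsequent LC on another $w' \in W$ toggles the pairs inside $N_{w'}$ again. The point is to count, for each pair $\{u,u'\} \subseteq U$, the number of $w \in W$ adjacent to both $u$ and $u'$: this equals the number of odd-weight columns of $P$ having a $1$ in both the $u$ and $u'$ positions, which is a fixed parity independent of the pair. I would verify this parity is even, so that after LC on all of $W$ the partition $U$ has no internal edges and the graph is again bipartite with partitions $U,W$; then, since ELC-preservation and the odd-graph property pin down the graph as the one corresponding to the (extended) Hamming code exactly as in Theorem~\ref{thm:exthammingcode}, the resulting bipartite graph must be isomorphic to $h^r_e$. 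One must also check the edges between $U$ and $W$ and the non-effect on $W$-internal structure, but these are straightforward: each $w$ is a pendant-like vertex relative to this computation in the sense that its neighbors lie entirely in $U$.

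For the partition $U$: here I would argue that LC on all vertices of $U$ yields a well-defined graph $h^r_*$ (order of operations irrelevant — a \emph{commutation} claim that I would reduce to the fact that LC on two vertices that remain adjacent/non-adjacent throughout commute appropriately, or more safely just verify that the final graph is characterized intrinsically). The cleanest route is to describe $h^r_*$ directly: LC on all of $U$ complements, for each $u\in U$, the induced subgraph on $N_u \subseteq W$; because any two $w,w'\in W$ are jointly adjacent to $u$ precisely when the $u$-th coordinate of both corresponding columns is $1$, the cumulative toggling between $w$ and $w'$ happens once for each coordinate where both columns carry a $1$, i.e.\ $\langle p_w, p_{w'}\rangle$ times, so the resulting adjacency within $W$ is the inner-product graph of the odd-weight vectors. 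I would then show $h^r_*$ is ELC-preserved by invoking that LC-equivalent graphs have their ELC orbits related predictably — more precisely, since $h^r_e$ is ELC-preserved and the chosen LC operations are applied uniformly to one full partition, any ELC operation on $h^r_*$ can be pulled back through the LC sequence to an ELC operation on $h^r_e$ (using Definition~\ref{prop:triplelc} to express everything in LC moves and checking the moves commute up to isomorphism); since $h^r_e$ is fixed by ELC, so is $h^r_*$. Finally, non-bipartiteness for $r\ge 4$ follows because the inner-product graph on the odd-weight vectors of $\mathbb{F}_2^{r+1}$ contains a triangle once $r+1 \ge 5$ (three weight-$3$ vectors pairwise sharing exactly one coordinate, whose inner products are all $1$), whereas for $r=3$ one checks by hand that $h^3_* = h^3_e$.

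The main obstacle I anticipate is the commutativity/well-definedness of "LC on all vertices of a partition, in any order": LC operations do not commute in general, and the induced subgraphs being complemented change as we proceed. I would handle this by giving the intrinsic description of $h^r_*$ above (so that the end result is manifestly order-independent) rather than by a direct induction on the sequence, and separately confirm that each individual LC move in the sequence is legal (the vertex being complemented has the expected neighborhood at that stage). The parity computations — that each pair in $U$ has an even number of common neighbors in $W$, and the triangle count for $r\ge4$ — are elementary binomial-coefficient identities of the same flavor as those already appearing in the proof of Theorem~\ref{thm:exthammingcode}, so I would state them without belaboring the arithmetic.
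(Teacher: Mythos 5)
Your handling of LC on the partition $W$ (each pair of vertices of $U$ has an even number, in fact $2^{r-2}$, of common neighbours, so no edges ever accumulate inside $U$ and the graph is literally unchanged) matches the paper's argument, and your intrinsic description of $h^r_*$ --- within-$W$ adjacency given by the $\mathbb{F}_2$ inner products of the odd-weight columns, with the $U$--$W$ edges untouched --- is exactly what the paper encodes in the matrix $M=\bigl(\begin{smallmatrix}\boldsymbol{0}&P\\P^{\mathrm{T}}&X\end{smallmatrix}\bigr)$. The genuine gap is the ELC-preservation of $h^r_*$, which is the substantive half of the theorem. You propose to ``pull back'' an ELC move on $h^r_*$ through the LC sequence to an ELC move on $h^r_e$ and conclude from the ELC-preservation of $h^r_e$. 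But LC equivalence does not transport ELC-orbit structure in this way: an ELC is $*u*v*u$, and conjugating it by the (non-commuting) sequence of LC operations on $U$ is not an ELC operation on $h^r_e$, nor anything you have shown to act by isomorphisms; the ``checking the moves commute up to isomorphism'' you defer is precisely the missing content. Indeed, the paper emphasizes that the LC orbits containing such pairs have other members that are \emph{not} ELC-preserved (e.g.\ the orbit of $s^n$ and $c^n$), so ``LC-equivalent to an ELC-preserved graph'' cannot by itself yield ELC-preservation --- some property special to $h^r_*$ must be isolated and shown to be ELC-invariant. The paper's proof supplies exactly this: it shows that the rows of $M+I$ are the $2^r$ codewords of $\mathcal{C}^\perp$ arising as odd linear combinations of the rows of $(I\mid P)$, verifies by a diagonal-flip count (during $*u*v*u$ each diagonal entry indexed by $N_u\cup N_v\setminus\{u,v\}$ is flipped exactly twice) that this codeword property survives any ELC, and then invokes the uniqueness, up to column permutation, of the extended Hamming parity check matrix (the odd-graph argument from Theorem~\ref{thm:exthammingcode}) to conclude that the post-ELC graph is isomorphic to $h^r_*$. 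Nothing in your proposal substitutes for this step.

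A smaller point: your non-bipartiteness witness fails at the boundary case $r=4$. Three weight-3 vectors pairwise sharing exactly one coordinate occupy at least six coordinates, so no such triple exists in $\mathbb{F}_2^{5}$. The conclusion is still easy to repair: any single edge inside $W$ already forces a triangle, since an odd-size intersection provides a common neighbour $u\in U$ and the $U$--$W$ edges are unchanged by the LC sequence; for $r\ge 4$ one can take two degree-two vertices of $h^r$ with disjoint neighbourhoods, which in $h^r_e$ share only the extension vertex --- this is the paper's choice of witness.
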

\begin{proof}
  Let $U$ denote the set of vertices in the partition of size $r+1$,
  and $W$ denote the set of vertices in the partition of size
  $2^r-r-1$. After performing LC on all vertices in $W$, two vertices
  $u, v \in U$ will be connected by an edge if and only if $u$ and $v$
  have an odd number of common neighbors in $W$. To show that LC on
  all vertices in $W$ preserves $h^r_e$, we must show that all pairs
  of vertices from $U$ have an even number of common neighbors. Let
  $u_e$ be the extension vertex that was added to $h^r$ to form
  $h^r_e$, as described in Definition~\ref{def:exthammingcode}, and let
  $u_i$ and $u_j$ be two other vertices in $U$. The number of
  neighbors common between $u_e$ and $u_i$ is
  $\sum_{i=1}^{\lfloor\frac{r}{2}\rfloor} \binom{r}{2i} \frac{2i}{r} =
  2^{r-2}$. The number of neighbors common between $u_i$ and $u_j$ is
  $\sum_{i=2}^{r} \binom{r-2}{i-2} = 2^{r-2}$.

  We will now show that LC on all vertices in $U$ transforms $h^r_e$
  into the ELC-preserved graph $h^r_*$. The adjacency matrix of
  $h^r_e$ can be written $\Gamma = \begin{pmatrix}\boldsymbol{0}_{r
      \times r} & P \\ P^\text{T} & \boldsymbol{0}_{(n-r) \times
      (n-r)}\end{pmatrix}$, where $(I \mid P)$ is the parity check
  matrix of $\mathcal{C}$, an extended Hamming code.  LC on a vertex
  $u \in U$ can be implemented on $\Gamma$ by adding row $u$ to all
  rows in $N_u$ and then changing the diagonal elements
  $\Gamma_{v,v}$, for all $v \in N_u$, from 1 to 0.  After performing
  LC on all vertices in $U$, the adjacency matrix of $h^r_*$ is $M
  = \begin{pmatrix}\boldsymbol{0} & P \\ P^\text{T} &
    X\end{pmatrix}$. Since each vertex in $W$ has an odd number of
  neighbors in $U$, each row of $X$ is the linear combination of an
  odd number of rows from $P$, except that all diagonal elements of
  $X$ have been changed from 1 to 0. Moreover, the non-zero coordinates
  of row $i$ of $P^T$ indicate which rows of $P$ were added to form
  row $i$ of $X$. It follows that the rows of the matrix
  $\begin{pmatrix}I & P \\ P^\text{T} & X+I\end{pmatrix}$ are the
  $2^r$ codewords of $\mathcal{C}^\perp$ formed by taking all linear
  combinations of an odd number of rows from $(I \mid P)$, since $(I
  \mid P)$ contains all odd weight columns from $\mathbb{F}_2^{r+1}$.
  After performing ELC on an edge $\{u,v\}$ in $h^r_*$, where $u \in
  U$ and $v \in W$, and then swapping vertices $u$ and $v$, we obtain
  an adjacency matrix $M' = \begin{pmatrix}\boldsymbol{0} & P' \\
    {P'}^\text{T} & X'\end{pmatrix}$.  After ELC on an edge $\{u,v\}$
  where $u,v \in W$, the vertices in $U$ will no longer be an
  independent set, but by permuting vertices from $U$ with vertices
  from $N_u$ or $N_v$, we can obtain the form $M'$.
  We need to show that the rows of $M'+I$ are $2^r$ codewords of a
  code equivalent to $\mathcal{C}^\perp$ formed by taking linear
  combinations of an odd number of rows from $(I \mid P')$.  Since,
  according to Theorem~\ref{thm:exthammingcode}, the extended Hamming
  code only has one parity check matrix, up to column permutations,
  this implies that $h^r_*$ is ELC-preserved.  ELC on $\{u,v\}$ is the
  same as LC on $u$, followed by LC on $v$, followed by LC on $u$
  again. We have seen that LC corresponds to row additions and
  flipping diagonal elements.  We only need to show that all diagonal
  elements of $M$ are flipped from 1 to 0 an even number of times to
  ensure that all rows of $M'+I$ are the codewords described above. If
  we swap vertices $u$ and $v$ after performing ELC, it follows from
  the definition of ELC that rows $u$ and $v$ of $M'$ must be the same
  as in $M$. As for the other rows, LC on $u$ flips $M_{i,i}$ for $i
  \in N_u \setminus \{v\}$ , LC on $v$ then flips $M_{i,i}$ for $i \in
  (N_v \cup N_u) \setminus (N_v \cap N_u)$, and finally, LC on $u$
  flips $M_{i,i}$ for $i \in N_v \setminus \{v\}$. In total, this
  means that for each $i \in N_u \cup N_v \setminus \{u,v\}$, the
  diagonal element $M_{i,i}$ has been flipped from 1 to 0 two times.

  The graph $h^r_*$ is non-bipartite if there is at least one pair of
  vertices from $W$ with an odd number of common neighbors in $U$. For
  $r \ge 4$, there must be a pair of vertices from $W_2 \subset W$,
  the set of $\binom{r}{2}$ vertices of degree $2$ in $h^r$, with no
  common neighbors in $h^r$ and hence one common neighbor, i.e. the
  extension vertex, in $h^r_e$.
\end{proof}

\section{ELC-preserved Codes}\label{sec:coding}

As we have already shown, the graph $h^3$ corresponds to the $[7,4,3]$
\emph{Hamming code}, and its dual $[7,3,4]$ \emph{simplex code}. The
graph $h^3_e$ corresponds to the self-dual $[8,4,4]$ extended Hamming
code. The star graph $s^n$ corresponds to the $[n,1,n]$
\emph{repetition code}, and its dual $[n,n-1,2]$ \emph{parity check
  code}. We can obtain larger ELC-preserved bipartite graphs using
Hamming expansion or star expansion, and the parameters of the
corresponding codes are given by the following theorems.

\begin{thm}\label{thm:hexpcodes}
   For a connected ELC-preserved $(r, k-r)$-bipartite graph $G$ on $k
   \ge 2$ vertices, the graph $H(G)$ corresponds to a $[7k, 3k+r, 4]$
   code $\mathcal{C}$, and to the dual $[7k, 4k-r, 4]$ code
   $\mathcal{C}^\perp$.
\end{thm}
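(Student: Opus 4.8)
The plan is to reduce everything to two facts: the partition sizes of $H(G)$ (which, via Definition~\ref{def:code}, give the length and dimensions) and the minimum vertex degree in each partition of $H(G)$ (which, via Theorem~\ref{thm:distance}, gives the minimum distances, since $H(G)$ is ELC-preserved by Theorem~\ref{hexapand} and so its ELC orbit is $\{H(G)\}$).

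First I would pin down the bipartition of $H(G)$. Recall that in each copy $h_i$ of $h^3$ the vertices $\{w_{7i},w_{7i+1},w_{7i+2}\}$ form one side and $\{w_{7i+3},w_{7i+4},w_{7i+5},w_{7i+6}\}$ the other (this is just the $(3,4)$-bipartition of the Hamming graph from Theorem~\ref{thm:hammingcode}). Since $G$ is connected and $k\ge 2$, its partitions $U$ (size $r$) and $W$ (size $k-r$) are both non-empty, so $1\le r\le k-1$. Colour $H(G)$ by declaring, for $v_i\in U$, the triple $\{w_{7i},w_{7i+1},w_{7i+2}\}$ ``black'' and the quadruple $\{w_{7i+3},\dots,w_{7i+6}\}$ ``white''; for $v_j\in W$, reverse the colours. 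Edges inside each $h_i$ are then bichromatic, and an edge $\{v_i,v_j\}\in E$ (with $v_i\in U$, $v_j\in W$) joins the black triple of $h_i$ to the white triple of $h_j$, so this is a proper $2$-colouring, as already guaranteed by the proposition that $H(G)$ is bipartite when $G$ is. Counting, the black partition has $3r+3(k-r)+\,?$ — more precisely $|{\rm black}| = 3r + 4(k-r)$ is wrong; carefully, black $=\{$triples of $h_i$, $v_i\in U\}\cup\{$quadruples of $h_j$, $v_j\in W\}$ has size $3r+4(k-r)=4k-r$, and white has size $4r+3(k-r)=3k+r$. Hence $H(G)$ is $(3k+r,\,4k-r)$-bipartite on $7k$ vertices, and Definition~\ref{def:code} immediately yields the $[7k,3k+r]$ code $\mathcal{C}$ and the dual $[7k,4k-r]$ code $\mathcal{C}^\perp$.

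Next I would compute degrees in $H(G)$. A direct check of the edge list of $h_i$ shows that each of $w_{7i+3},w_{7i+4},w_{7i+5},w_{7i+6}$ has degree $3$ (all its edges stay inside $h_i$), while each of $w_{7i},w_{7i+1},w_{7i+2}$ has exactly $2$ neighbours inside $h_i$ plus $3d_i$ neighbours coming from the $d_i\ge 1$ edges of $G$ incident to $v_i$, for a total degree $2+3d_i\ge 5$. Under the colouring above, the degree-$3$ vertices attached to $v_i\in U$ lie in the black ($=$ size $3k+r$) partition and those attached to $v_j\in W$ lie in the white ($=$ size $4k-r$) partition; since both $U$ and $W$ are non-empty, each partition contains a vertex of degree $3$, and every remaining vertex has degree $\ge 5>3$. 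So the minimum vertex degree is $3$ in each partition. Theorem~\ref{thm:distance} applied to the trivial ELC orbit $\{H(G)\}$ then gives minimum distance $\delta+1=4$ for both $\mathcal{C}$ and $\mathcal{C}^\perp$, completing the claim.

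The only real care is in the bookkeeping of the second-to-last paragraph: matching each $w_{7i+t}$ to the correct partition of $H(G)$ depending on whether $v_i\in U$ or $v_i\in W$, and not over- or under-counting the $3d_i$ cross-copy edges. Everything else — the degree computation inside a single $h^3$, and the invocations of Definitions and Theorems~\ref{thm:hammingcode},~\ref{thm:distance},~\ref{hexapand} — is routine.
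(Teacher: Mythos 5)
Your overall strategy is exactly the paper's: read off the length and the two dimensions from the bipartition of $H(G)$, and get both minimum distances from Theorem~\ref{thm:distance} using the fact that the ELC orbit of $H(G)$ is trivial (Theorem~\ref{hexapand}) and that the minimum vertex degree in each partition is $3$. However, you have misidentified the bipartition of $h^3$. The edge list of $h_i$ contains $\{w_{7i+3},w_{7i+6}\}$, $\{w_{7i+4},w_{7i+6}\}$ and $\{w_{7i+5},w_{7i+6}\}$, so your ``quadruple'' $\{w_{7i+3},\dots,w_{7i+6}\}$ is not an independent set and the colouring you propose is not a proper $2$-colouring of $H(G)$. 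The correct $(3,4)$-bipartition of $h_i$ is $\{w_{7i+3},w_{7i+4},w_{7i+5}\}$ versus $\{w_{7i},w_{7i+1},w_{7i+2},w_{7i+6}\}$: in the notation of Theorem~\ref{thm:hammingcode}, $w_{7i+6}$ is the unique vertex of $W_3$ and sits on the \emph{same} side as the three degree-$2$ vertices $w_{7i},w_{7i+1},w_{7i+2}$.

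The numerical conclusions happen to survive this slip, because $4r+3(k-r)=3k+r$ and $3r+4(k-r)=4k-r$ whichever way the sets of sizes $3$ and $4$ are distributed, and because with the correct colouring every single copy $h_i$ contributes degree-$3$ vertices to \emph{both} partitions of $H(G)$ ($w_{7i+6}$ to one side, and $w_{7i+3},w_{7i+4},w_{7i+5}$ to the other), so the minimum degree in each partition is still $3$. But your stated justification for both facts is wrong: in particular the claim that all four degree-$3$ vertices of $h_i$ lie in one partition, so that you need both $U$ and $W$ to be non-empty in order to cover both colours, is false. This is exactly the bookkeeping you flagged as the delicate step, and it needs to be redone with $w_{7i+6}$ on the correct side; once that is fixed, the remainder of your argument (the $2+3d_i\ge 5$ bound using connectivity of $G$, and the appeals to Theorems~\ref{hexapand} and~\ref{thm:distance}) is sound and coincides with the paper's proof, which likewise observes that $w_6$ and $w_3,w_4,w_5$ have degree $3$ and belong to different partitions.
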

\begin{proof}
  From the construction of $H(G)$, we get that $\mathcal{C}$ must have
  length $n=7k$. The codes $\mathcal{C}$ and $\mathcal{C}^\perp$ have
  dimension $3k+r$ and $4k-r$, respectively, since $H(G)$ has
  partitions of size $3k+r$ and $4k-r$ when $G$ has partitions of size
  $r$ and $k-r$. That both $\mathcal{C}$ and $\mathcal{C}^\perp$ have
  minimum distance 4 follows from the fact that the minimum vertex
  degree in both partitions of $H(G)$ is 3. This is verified by
  observing that the subgraph $h_0$, shown in Fig.~\ref{fig:subh}, has
  one vertex $w_6$ of degree 3, and three vertices $w_3$, $w_4$, and
  $w_5$ of degree 3, belonging to different partitions. Moreover, the
  degrees of $w_0$, $w_1$, and $w_2$ must be at least 5, since $G$ is
  connected.
\end{proof}

\begin{thm}
  Let $G$ be a connected ELC-preserved $(r, k-r)$-bipartite graph on
  $k \ge 2$ vertices and assume, without loss of generality, that $r
  \le k-r$. Let $G$ correspond to a $[k, r, d]$ code and its dual $[k,
  k-r, d']$ code. Then $S_+^m(G)$ corresponds to an $[mk, r, md]$ code
  and its dual $[mk, mk-r, 2]$ code. $S_-^m(G)$ corresponds to an
  $[mk, k-r, md']$ code and its dual $[mk, mk-k-r, 2]$ code.
\end{thm}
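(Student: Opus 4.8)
The plan is to track how the star expansion construction $S^m(G)$ affects the corresponding code and its dual, using Theorem~\ref{thm:distance} to read off minimum distances from vertex degrees. First I would recall from Definition~\ref{def:code} that if $G$ is $(r,k-r)$-bipartite then it corresponds to a $[k,r,d]$ code $\mathcal{C}$ (generator matrix $(I\mid P)$, with the size-$r$ partition indexing the identity block) and to $\mathcal{C}^\perp=[k,k-r,d']$. By the discussion after Theorem~\ref{sexapand}, $S_+^m(G)$ (substituting the \emph{larger} partition, of size $k-r$, by $e^m$ and adding $m-1$ pendants to each vertex of the size-$r$ partition) is $((k-r)m + r(m-1)+r,\; r) = (m k - r,\; r)$-bipartite, wait---more carefully, $S^m(G)$ has $n=mk$ vertices, and substituting a partition $P_1$ of size $s$ by $e^m$ while adding $m-1$ pendants to each of the $k-s$ vertices of $P_2$ yields partitions of size $(k-s) + (k-s)(m-1) = (k-s)m$ wait again: the pendants attach to $P_2$ and lie in the \emph{opposite} partition, so one partition has the $sm$ substituted vertices plus the $(k-s)(m-1)$ pendants, giving $sm + (k-s)(m-1)$, and the other has $k-s$. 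For $S_+^m$, $s=k-r$, so the size-$r$ partition stays the size-$r$ partition; the corresponding code thus still has dimension $r$ and length $mk$, giving $[mk,r,\,\cdot\,]$ and its dual $[mk,\,mk-r,\,\cdot\,]$, matching the claim. Similarly $S_-^m$ ($s=r$) keeps the size-$(k-r)$ partition as the dimension, giving $[mk,k-r,\,\cdot\,]$ and dual $[mk,mk-k-r,\,\cdot\,]$.

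Next I would compute the four minimum distances via Theorem~\ref{thm:distance}, i.e.\ by finding the minimum vertex degree in each partition of $S^m(G)$. For the dual codes the claim is that the minimum distance is $2$, i.e.\ minimum degree $1$: this is immediate because star expansion \emph{adds pendant vertices}, which have degree exactly $1$, and these pendants lie in the partition whose dimension is $mk-r$ (resp.\ $mk-k-r$), so the dual code has a weight-$2$ codeword; since the graph is connected there are no degree-$0$ vertices, so the minimum is exactly $1$ and $d=2$. For the primary code I need the minimum degree in the \emph{other} partition (the one of size $r$ for $S_+^m$, size $k-r$ for $S_-^m$). A vertex $v$ in that partition was, in $G$, a vertex of degree equal to its $G$-degree $\deg_G(v)$; in $S^m(G)$ each of its $G$-neighbors (all in the substituted partition) has been blown up to $m$ copies, and $v$ additionally receives $m-1$ pendants. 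Hence $\deg_{S^m(G)}(v) = m\deg_G(v) + (m-1)$. Wait---I must be careful: the pendants are added to the partition being \emph{kept as identity}, i.e.\ to $P_2$; for $S_+^m$ that is the size-$r$ partition, and for $S_-^m$ it is the size-$(k-r)$ partition. So in both cases the relevant ``primary'' partition's vertices get both the $m$-fold blowup of neighbors and the $m-1$ pendants, giving degree $m\deg_G(v)+(m-1)$. Then the minimum over that partition is $m\delta + (m-1)$ where $\delta$ is the minimum degree in the corresponding partition of $G$; by Theorem~\ref{thm:distance} the $G$-code distance is $d=\delta+1$ (resp.\ $d'$), so $\delta = d-1$, and the new distance is $\bigl(m(d-1)+(m-1)\bigr)+1 = md$. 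This yields $md$ for $S_+^m(G)$ and $md'$ for $S_-^m(G)$, as claimed.

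The step I expect to require the most care is bookkeeping the partition sizes and which partition the pendants land in, so as to correctly match ``$S_+$'' (substitute the larger partition) and ``$S_-$'' (substitute the smaller) to the right dimension and the right $d$ versus $d'$; a sign or subscript slip there would flip $md$ and $md'$ or mislabel the dual dimension. I would also want to double-check one subtlety: that the minimum degree in the primary partition of $S^m(G)$ is genuinely achieved at the blown-up image of a minimum-degree vertex of $G$, rather than at some pendant---but pendants lie in the \emph{other} partition, so this is fine---and that every primary vertex has $\deg_G(v)\ge 1$ (true since $G$ is connected), so the formula $m\deg_G(v)+(m-1)$ never degenerates. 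Finally I would note the codes are well-defined $[n,k,d]$ parameters: connectivity of $S^m(G)$ follows from connectivity of $G$ together with the fact that substituting by $e^m$ and adding pendants keeps the graph connected (each new vertex is attached to the old graph). With these pieces the theorem follows by assembling the four $(\text{length},\text{dimension},\text{distance})$ triples.
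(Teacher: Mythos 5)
Your proposal is correct and follows essentially the same route as the paper's proof: both compute the length $mk$, track which partition is substituted versus which receives the pendants to get the code dimensions, observe that a minimum-degree vertex of $G$ acquires degree $m(d-1)+(m-1)=md-1$ (resp.\ $md'-1$) while the pendants give degree-one vertices in the other partition, and then read off the distances $md$ (resp.\ $md'$) and $2$ via Theorem~\ref{thm:distance} together with ELC-preservation of $S^m(G)$; your write-up is in fact slightly more careful about the partition bookkeeping and about why the minimum is attained where claimed. One small point: the dual dimension for $S_-^m(G)$ should be $mk-(k-r)=mk-k+r$, which your own partition-size formula $rm+(k-r)(m-1)$ correctly yields, even though both the statement and your final line (and the paper's proof) write it as $mk-k-r$.
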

\begin{proof}
  From the construction of $S^m(G)$, we get that all the codes must
  have length $n=mk$. In $G$, the minimum vertex degree in the
  partition of size $r$ must be $d-1$, and the minimum vertex degree
  in the other partition must be $d'-1$. In $S_+^m(G)$, $k-r$ vertices
  of $G$ have been substituted by $e^m$ and $m$ pendants have been
  added to the other $r$ vertices. Hence, $S_+^m(G)$ must contain a
  partition of size $r$ with minimum vertex degree $md-1$, since the
  vertex of degree $d-1$ in $G$ is now connected to $d-1$ copies of
  $e^m$ plus $m-1$ pendants. The other partition of $S_+^m(G)$ has
  size $mk-r$, and contains pendants, i.e., vertices of degree one.
  By similar argument, $S_-^m(G)$ has a partition of size $k-r$ with
  minimum vertex degree $md'-1$ and a partition of size $mk-k-r$ with
  minimum vertex degree one. The dimensions and minimum distances of
  the corresponding codes follow.
\end{proof}

We observe that the ELC-preserved graphs $h^3_e$ and $H(s^2)$
correspond to $[8,4,4]$ and $[14,7,4]$ self-dual codes. A natural
question to ask is whether there are other ELC-preserved self-dual
codes. All self-dual binary codes of length $n \le 34$ have been
classified by Bilous and van~Rees~\cite{bilous,bilous2}. A database
containing one representative from each equivalence class of codes
with $n \le 32$ and $d \ge 4$, and one representative from each
equivalence class of codes with $n = 34$ and $d \ge 6$ is available
on-line at \url{http://www.cs.umanitoba.ca/~umbilou1/}. We have
generated the ELC orbits of all the corresponding bipartite graphs,
and found that $h^3_e$ and $H(s^2)$ are the only ELC-preserved graphs,
as shown in Table~\ref{tab:selfdual}. However, as the following
theorem shows, we can construct ELC-preserved self-dual codes with $n
\ge 56$ by iterated Hamming expansion of $h^3_e$ and $H(s^2)$.

\begin{table}
\centering
\caption{ELC orbit size of graphs corresponding to self-dual codes}
\label{tab:selfdual}
\begin{tabular}{rrrrr}
\toprule
$n$ & $d$ & Codes & ELC-preserved & Size two ELC orbits \\
\midrule
8  & $\ge 4$ & 1    & 1 & - \\
10 & $\ge 4$ & -    & - & - \\
12 & $\ge 4$ & 1    & - & 1 \\
14 & $\ge 4$ & 1    & 1 & - \\
16 & $\ge 4$ & 2    & - & 1 \\
18 & $\ge 4$ & 2    & - & - \\
20 & $\ge 4$ & 6    & - & 1 \\
22 & $\ge 4$ & 8    & - & - \\
24 & $\ge 4$ & 26   & - & 2 \\
26 & $\ge 4$ & 45   & - & - \\
28 & $\ge 4$ & 148  & - & 1 \\
30 & $\ge 4$ & 457  & - & - \\
32 & $\ge 4$ & 2523 & - & 2 \\
34 & $\ge 6$ & 938  & - & - \\
\bottomrule
\end{tabular}
\end{table}

\begin{thm}
  Let $H^r(G) = H(\cdots H(G))$ denote the $r$-fold Hamming expansion
  of $G$. Then for $r \ge 1$, $H^r(h^3_e)$ corresponds to an
  ELC-preserved self-dual $[8 \cdot 7^r, 4 \cdot 7^r, 4]$ code, and
  $H^{r+1}(s^2)$ corresponds to an ELC-preserved self-dual $[2 \cdot
  7^{r+1}, 7^{r+1}, 4]$ code.
\end{thm}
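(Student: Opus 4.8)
The plan is to prove both claims simultaneously by induction on $r$, the base cases being $h^3_e$ (for the first family) and $H(s^2)$ (for the second), which correspond to the self-dual $[8,4,4]$ and $[14,7,4]$ codes as recorded just before the statement. The inductive engine is a single lemma: \emph{if $G$ is a bipartite graph corresponding to a self-dual code, then $H(G)$ corresponds to a self-dual code}. Granting the lemma, everything else is bookkeeping. ELC-preservation propagates along $H$ by Theorem~\ref{hexapand}, and bipartiteness by the proposition that $H(G)$ is bipartite whenever $G$ is; so, starting from the ELC-preserved bipartite graphs $h^3_e$ (Theorem~\ref{thm:exthammingcode}) and $s^2$, all the graphs $H^r(h^3_e)$ and $H^{r+1}(s^2)$ are ELC-preserved bipartite, hence correspond to ELC-preserved codes. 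Each application of $H$ multiplies the vertex count by $7$, so the lengths are $8\cdot 7^r$ and $2\cdot 7^{r+1}$; since the codes are self-dual, the dimensions are half of these, namely $4\cdot 7^r$ and $7^{r+1}$. Finally, for $r\ge 1$ each of $H^r(h^3_e)=H(H^{r-1}(h^3_e))$ and $H^{r+1}(s^2)=H(H^r(s^2))$ is the Hamming expansion of a connected ELC-preserved bipartite graph on at least two vertices, so Theorem~\ref{thm:hexpcodes} gives minimum distance $4$ for the code and its dual.

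To prove the lemma I would pass to adjacency matrices over $\mathbb{F}_2$. First note that a bipartite graph corresponds to a self-dual code exactly when its adjacency matrix $\Gamma$ satisfies $\Gamma^2=I$: in the bipartite vertex order $\Gamma=\left(\begin{smallmatrix}\boldsymbol{0} & P\\ P^\text{T} & \boldsymbol{0}\end{smallmatrix}\right)$ as in Definition~\ref{def:code}, whence $\Gamma^2=\left(\begin{smallmatrix}PP^\text{T} & \boldsymbol{0}\\ \boldsymbol{0} & P^\text{T}P\end{smallmatrix}\right)$, and $\Gamma^2=I$ is equivalent to $PP^\text{T}=I$ with $P$ necessarily square, i.e.\ to the code generated by $(I\mid P)$ being equal to its dual. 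This criterion is invariant under relabelling of the vertices, so it may be applied to any adjacency matrix of the graph.

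Now let $G$ have $k$ vertices and adjacency matrix $\Gamma$, let $Q$ be the $7\times 7$ adjacency matrix of the graph $h^3$ in the labelling of Theorem~\ref{hexapand}, and let $R=\boldsymbol{j}\boldsymbol{j}^\text{T}$, where $\boldsymbol{j}\in\mathbb{F}_2^7$ is the indicator vector of the three connector vertices $w_{7i},w_{7i+1},w_{7i+2}$. Inspecting the construction of $H(G)$ shows that, in the vertex order which lists the seven vertices of $h_0$, then those of $h_1$, and so on, its adjacency matrix is $\Gamma_{H(G)}=I_k\otimes Q+\Gamma\otimes R$, where $\otimes$ is the Kronecker product. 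Two elementary facts about the seven-vertex graph $h^3$, each checkable by hand, now drive the computation: every vertex of $h^3$ has an even number of neighbours among the three connectors, that is $Q\boldsymbol{j}=\boldsymbol{0}$ (hence $QR=RQ=\boldsymbol{0}$); and, collecting the vertex degrees and the parities of the pairwise common-neighbour counts, $Q^2=I_7+R$. Together with $R^2=(\boldsymbol{j}^\text{T}\boldsymbol{j})\,R=R$ (as $\boldsymbol{j}^\text{T}\boldsymbol{j}=3$ is odd), expanding the square (the cross term being $\Gamma\otimes(QR+RQ)=\boldsymbol{0}$) gives
\[
\Gamma_{H(G)}^2 \;=\; I_k\otimes Q^2 \;+\; \Gamma^2\otimes R^2 \;=\; I_k\otimes(I_7+R)\;+\;\Gamma^2\otimes R \;=\; I_{7k}\;+\;(I_k+\Gamma^2)\otimes R .
\]
If the code of $G$ is self-dual then $\Gamma^2=I_k$ by the criterion above, the correction term vanishes, and $\Gamma_{H(G)}^2=I_{7k}$; hence the code of $H(G)$ is self-dual, proving the lemma.

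The only genuine computation is the pair of facts $Q\boldsymbol{j}=\boldsymbol{0}$ and $Q^2=I_7+R$ for the fixed graph $h^3$; once these are verified the tensor identity and the induction are immediate, and I expect no further obstacle. A minor point worth a sentence is that $H$ preserves connectedness (the blocks $h_i$ are connected and the connector edges link adjacent blocks), so that Theorem~\ref{thm:hexpcodes} may legitimately be invoked at each step for the minimum distance.
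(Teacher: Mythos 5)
Your proof is correct, but it takes a genuinely different route from the paper. The paper gets self-duality by combining three observations: the base graphs have symmetric $P$ and Hamming expansion preserves that symmetry (it acts identically on both partitions); $P$ for $H(G)$ has full rank when $P$ for $G$ does; and, crucially, an ELC-preserved code has only one standard-form generator matrix up to column permutations, which together with symmetry of $P^{-1}$ upgrades isoduality ($P=P^\text{T}$) to genuine self-duality. So ELC-preservation is an essential ingredient of the paper's self-duality argument. You instead prove a structural lemma that bypasses ELC-preservation entirely: with the block labelling of Theorem~\ref{hexapand}, $\Gamma_{H(G)} = I_k \otimes Q + \Gamma \otimes R$, and the two finite checks $Q\boldsymbol{j}=\boldsymbol{0}$ and $Q^2 = I_7 + R$ (both of which I verified against the edge list of $h_0$: every vertex has an even number of neighbours among $w_0,w_1,w_2$; the degrees are even exactly on the connectors; and all common-neighbour counts are even except the three connector pairs, which each share one neighbour) give $\Gamma_{H(G)}^2 = I_{7k} + (I_k+\Gamma^2)\otimes R$, so $H$ preserves the self-duality criterion $\Gamma^2=I$ for \emph{any} bipartite graph whose code is self-dual. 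Your route buys a cleaner and more general statement (Hamming expansion preserves self-duality per se), reduces the whole question to two hand-checkable identities on a $7\times 7$ matrix, and avoids the somewhat delicate uniqueness-of-generator-matrix step; the paper's argument is shorter given its earlier machinery but leans on that step. Your treatment of the remaining bookkeeping (ELC-preservation via Theorem~\ref{hexapand}, bipartiteness, connectedness so that Theorem~\ref{thm:hexpcodes} yields length, and distance $4$, with dimension from self-duality) matches the paper's use of Theorem~\ref{thm:hexpcodes} and is sound.
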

\begin{proof}
  The parameters of the codes follows from
  Theorem~\ref{thm:hexpcodes}. It remains to show that they are
  self-dual. A code with generator matrix $(I \mid P)$ is self-dual if
  the same code is also generated by $(P^\text{T} \mid I)$, i.e., if
  $P^{-1} = P^\text{T}$. The codes associated with both $h^3_e$ and
  $H(s^2)$ have the property that $P=P^\text{T}$, and Hamming
  expansion must preserve this symmetry since it has the same effect
  on both partitions of the graph. In general, $P=P^\text{T}$ only
  implies that a code is isodual, but we can prove a stronger property
  in this case. Note that $P$ corresponding to $H(G)$ will have full
  rank when $P$ corresponding to $G$ has full rank, since we know that
  $P$ corresponding to $H(s^2)$, which is the Hamming expansion of the
  induced subgraph on any pair of vertices connected by an edge in
  $G$, has full rank. Since an ELC-preserved code only has one
  generator matrix, up to column permutations, and the inverse of a
  symmetric matrix is symmetric, we must have that $P^{-1}(I \mid P) =
  (P \mid I)$. Hence the code is self-dual.
\end{proof}

\section{Orbits of Size Two}\label{sec:sizetwo}

ELC-preserved codes with good properties could have practical
applications in iterative decoding~\cite{castle,isit,wbelc,isit2010}.
However, there seem to be extremely few such codes, and, except for
the perfect Hamming codes, graphs arising from the constructions in
Section~\ref{sec:const} correspond to $[n,k,d]$ codes with either low
minimum distance $d$ or low rate $\frac{k}{n}$, compared to the best
known codes of the same length.  Iterative decoding with ELC also
works for graphs with larger ELC orbits, such as \emph{quadratic
  residue (QR) codes}~\cite{isit}, and has performance close to that
of iterative permutation decoding~\cite{halford} for graphs with small
ELC orbits, such as the \emph{extended Golay code}~\cite{isit}.  The
self-dual $[24,12,8]$ extended Golay code corresponds to a bipartite
graph with an ELC orbit of size two.  We have also found a $[15,5,7]$
\emph{Bose-Chaudhuri-Hocquenghem (BCH) code} with an ELC-orbit of size
two~\cite{wbelc}, but observed that larger QR and BCH codes have much
larger orbits. As a generalization of ELC-preserved graphs, we now
briefly consider graphs with ELC orbits of size two. The number of
size two orbits are listed in Table~\ref{tab:enumerate2}.  

We have also counted LC orbits of size two. (Note that the only
connected graphs of order up to 12 which have LC orbits of size one
are the trivial graphs of order one and two, and it remains an open
problem to prove that these two graphs are the only graphs with LC
orbits of size one.) Clearly there is an LC orbit $\{s^n, c^n\}$ for
all $n \ge 3$.  The only other size two LC orbit we find for $n \le
12$ is comprised of the two graphs of order six depicted in
Fig.~\ref{fig:hexaorbit} (These two graphs correspond to the self-dual
\emph{Hexacode} over $\mathbb{F}_4$~\cite{classlc}.)

\begin{table}
\centering
\caption{Number of orbits of size two}
\label{tab:enumerate2}
\begin{tabular}{rrrr}
\toprule
$n$ & Bipartite ELC & Non-bipartite ELC & LC \\
\midrule
3  &           - &        - &          1 \\
4  &           1 &        1 &          1 \\
5  &           2 &        3 &          1 \\
6  &           4 &        9 &          2 \\
7  &           6 &       10 &          1 \\
8  &           9 &       21 &          1 \\
9  &          12 &       22 &          1 \\
10 &          22 &       43 &          1 \\
11 &          22 &       41 &          1 \\
12 &          33 &       91 &          1 \\
13 &          35 &          &            \\
14 &          53 &          &            \\
15 &          48 &          &            \\
\bottomrule
\end{tabular}
\end{table}

\begin{figure}
  \centering
  {\includegraphics[width=.30\linewidth]{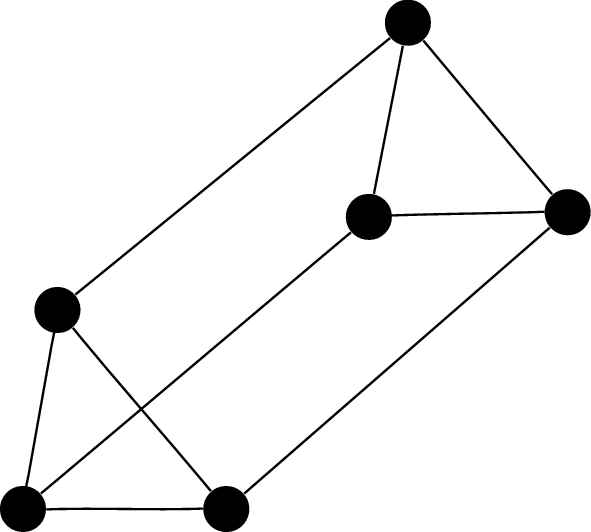}
  \quad
  \includegraphics[width=.30\linewidth]{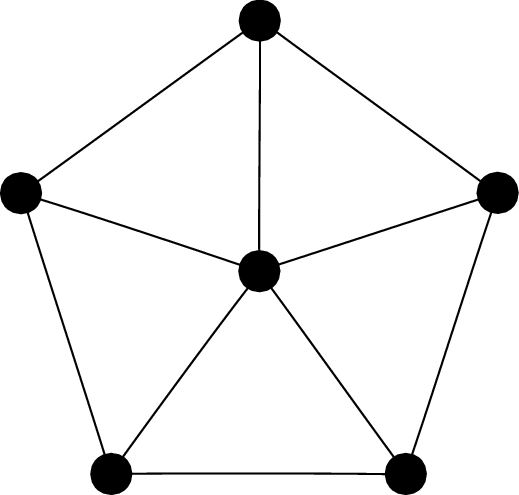}}
  \caption{LC orbit of size two}\label{fig:hexaorbit}
\end{figure}

We have also looked at the ELC orbits corresponding to self-dual codes
of length $n \le 34$, as seen in Table~\ref{tab:selfdual}. Except for
the $[24,12,8]$ extended Golay code and a $[32,16,4]$ code, the
remaining self-dual codes in this table with ELC orbits of size two,
all with minimum distance four, can be constructed by the following
theorem. It remains an open problem to devise a general construction
for self-dual codes with ELC orbits of size two and minimum distance
greater than four.

\begin{thm}
  Let $G$ be a $(2m,2m)$-bipartite graph on $4m$ vertices, where $m
  \ge 3$.  Let the vertices in one partition be labeled $v_1, v_2,
  \ldots, v_{2m}$, and the vertices in the other partition be labeled
  $w_1, w_2, \ldots, w_{2m}$. Let there be an edge $\{v_i,w_j\}$
  whenever $i \ne j$. Then $G$ has an ELC orbit of size two and
  corresponds to a self-dual $[4m, 2m, 4]$ code $\mathcal{C}$.
\end{thm}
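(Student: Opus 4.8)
The plan is to analyze the graph $G$ explicitly. Its adjacency structure is $\Gamma = \begin{pmatrix} \boldsymbol{0} & P \\ P^{\text{T}} & \boldsymbol{0}\end{pmatrix}$ where $P = J - I$ is the $2m \times 2m$ all-ones matrix minus the identity. First I would observe that $G$ corresponds to the code generated by $(I \mid J - I)$, and then compute its parameters: since $m \ge 3$, the rows of $J-I$ all have weight $2m - 1 \ge 5$, and any two rows of $(I \mid J-I)$ have inner product equal to (number of shared $1$'s in the $J-I$ part) $= 2m - 2$, which is even, while each row has weight $1 + (2m-1) = 2m$, also even; hence the code is self-orthogonal of dimension $2m$ in $\mathbb{F}_2^{4m}$, therefore self-dual. (One checks $(J-I)^{\text{T}} = J-I$ and $(J-I)^2 = J^2 - 2J + I = 2mJ - 2J + I \equiv I \pmod 2$, so $P^{-1} = P^{\text{T}}$, giving self-duality directly.) The minimum distance is $4$: every nonzero codeword has even weight (self-orthogonal binary code with all generator rows of even weight, so $d$ is even), weight $2$ is impossible since that would force two columns of the generator to be equal or one to be zero — but the columns of $(I \mid J - I)$ are $2m$ distinct unit vectors together with $2m$ distinct weight-$(2m-1)$ vectors, all distinct and nonzero for $m \ge 3$ — and the all-ones-on-one-side type combinations realize weight $4$ (e.g., $e_i + e_j$ in the message part yields a codeword of weight $2 + 2 = 4$ since rows $i$ and $j$ of $J-I$ differ in exactly two positions). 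This establishes the claim about $\mathcal{C}$.

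Next I would determine the ELC orbit. Pick the edge $\{v_1, w_2\}$ (valid since $1 \ne 2$). In $G$, we have $N_{v_1} = \{w_j : j \ne 1\}$ and $N_{w_2} = \{v_i : i \ne 2\}$, so $A = N_{v_1} \setminus (N_{w_2} \cup \{w_2\}) = \{w_j : j \notin \{1,2\}\}$ has size $2m - 2$, similarly $B = \{v_i : i \notin \{1,2\}\}$ has size $2m - 2$, and $C = \emptyset$ (the graph is bipartite). ELC toggles all edges between $A$ and $B$ and swaps labels $v_1 \leftrightarrow w_2$. Before toggling, there are no edges between $A$ and $B$ except... wait — $A \subseteq W$ and $B \subseteq V$, and $w_j$ is adjacent to $v_i$ iff $i \ne j$; so within $A \times B$, the pair $\{w_j, v_i\}$ (with $i, j \notin \{1,2\}$) is an edge iff $i \ne j$, i.e., the bipartite graph between $A$ and $B$ is the complement of a perfect matching. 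After toggling, it becomes a perfect matching. So $G^{(v_1,w_2)}$ is a genuinely different graph $G'$; I would then argue $G'$ is, up to isomorphism, independent of which edge we chose (by the vertex-transitivity of $G$ on edges: the map $v_i \mapsto v_{\sigma(i)}$, $w_j \mapsto w_{\sigma(j)}$ for a permutation $\sigma$ with $\sigma(1)=i_0$, $\sigma(2)=j_0$ is an automorphism of $G$ carrying $\{v_1,w_2\}$ to any prescribed edge), so every ELC move from $G$ yields a graph isomorphic to $G'$.

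Finally I would show the orbit closes, i.e., $G'^{(u,v)}$ is isomorphic to either $G$ or $G'$ for every edge $\{u,v\}$ of $G'$. The cleanest route: describe $G'$ explicitly (it is $G$ with a sub-block of edges between $A$ and $B$ replaced by a matching, plus the relabeling) and check that $G'$ again has enough symmetry that all its edges fall into just one or two orbits under $\Aut(G')$, then compute ELC on a representative of each and verify the result lands back in $\{G, G'\}$ up to isomorphism — equivalently, by Theorem~\ref{thm:elccode}, verify that the code associated to $G'$ is equivalent to $\mathcal{C}$ (automatic) and that $\mathcal{C}$ has exactly two standard-form generator matrices up to column permutation, which can be checked from the automorphism group of $\mathcal{C}$ acting on the information sets. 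The main obstacle is precisely this last step: showing the orbit has size exactly two rather than larger requires either a careful hand analysis of $\Aut(G')$ and the ELC images of its edge orbits, or an argument via the automorphism group of the code $\mathcal{C}$ (which contains the symmetric group $S_{2m}$ acting simultaneously on the two halves, plus the duality swap) showing it acts transitively on all but one "type" of information set; I would develop whichever of these is shorter, likely the direct graph computation since $G'$ is still highly structured.
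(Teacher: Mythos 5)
Your code-theoretic half is sound: identifying $P=J-I$, checking $P^{-1}=P=P^{\text{T}}$ over $\mathbb{F}_2$ (or the self-orthogonality count), and the direct minimum-distance argument (even weights, no repeated or zero columns in $(I\mid J-I)$ viewed as a parity check matrix of the self-dual code, explicit weight-4 word) are all correct; the paper instead reads off $d=4$ from Theorem~\ref{thm:distance} once the orbit is known, so your route is a legitimate alternative there. Your first ELC computation is also right: between $A=\{w_l: l\notin\{1,2\}\}$ and $B=\{v_l: l\notin\{1,2\}\}$ the graph is the complement of a perfect matching, ELC replaces it by a perfect matching, and edge-transitivity of $G$ shows every single ELC move yields one graph $G'$ up to isomorphism. (Do say explicitly why $G'\not\cong G$: for $m\ge 3$ it has vertices of degree $3\neq 2m-1$, which also gives ``orbit size at least two''.)

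The genuine gap is the closure step, which is the heart of the claim ``orbit size exactly two'': you must show that ELC on \emph{every} edge of $G'$ returns a graph isomorphic to $G$ or to $G'$, and you leave this entirely unexecuted, offering only two candidate strategies (edge-orbit analysis under $\Aut(G')$, or counting standard-form generator matrices of $\mathcal{C}$ via its automorphism group acting on information sets) and yourself labeling it ``the main obstacle''. A priori ELC applied to $G'$ could produce a third graph, so this cannot be waved through. The paper closes it with a short direct computation on $G'$: up to its evident symmetry there are only three edge types --- the two edges joining high-degree (hub) vertices, whose ELC undoes the first move and returns $G$; a matching edge $\{v_k,w_k\}$ between degree-3 vertices, whose ELC preserves $G'$ by in effect exchanging the two hubs on one side; and an edge from a hub to a degree-3 vertex, whose ELC preserves $G'$ by swapping the roles of that degree-3 vertex and a hub. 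Your code-based alternative could in principle work via Theorem~\ref{thm:elccode}, but the assertion you would need about $\Aut(\mathcal{C})$ and its action on information sets is itself stated without proof, so as written neither route is completed and the theorem's main assertion remains unproved in your proposal.
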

\begin{proof}
  The code $\mathcal{C}$ has generator matrix $(I \mid P)$ where $P$
  is circulant with first row $(01\cdots1)$. It can be verified that
  $P^{-1} = P = P^\text{T}$ when $P$ is of this form with even
  dimensions. Hence $P^{-1}(I \mid P) =(P^\text{T} \mid I)$ and
  $\mathcal{C}$ is self-dual.  (An $(m,m)$-bipartite graph constructed
  as above for odd $m \ge 7$ would still have an ELC orbit of size two
  but would correspond to a non-self-dual $[2m, m, 4]$ code.) Note
  that $m=1$ and $m=2$ must be excluded, since they produce the
  ELC-preserved graphs $s^2$ and $h^3_e$, respectively.

  Due to the symmetry of $G$ we only need to consider ELC on one edge
  $\{v_i, w_j\}$. This will take us to a graph $G'$ where the
  neighborhoods of $v_i$, $v_j$, $w_i$, $w_j$ are unchanged, but where
  $N_{v_k} = \{v_i, v_j, w_k\}$ and $N_{w_k} = \{w_i, w_j, v_k\}$, for
  all $k \ne i,j$. We need to consider ELC on three types of edges in
  $G'$. ELC on $\{v_i, w_j\}$ or $\{v_j, w_i\}$ will take us back to
  $G$. ELC on an edge $\{v_k, w_k\}$ will preserve $G'$, since it
  simply removes edges $\{v_i, w_j\}$ and $\{v_j, w_i\}$ and adds
  edges $\{v_i, w_i\}$ and $\{v_j, w_j\}$, thus in effect swapping
  vertices $v_i$ and $v_j$. Finally, ELC on an edge $\{v_i, w_k\}$
  also preserves $G'$, since it swaps the roles of vertices $v_j$ and
  $v_k$. (ELC on $\{v_k, w_i\}$ similarly swaps $w_j$ and $w_k$.) This
  can be seen by noting that $w_k$ has neighbors $v_j$ and $v_k$, with
  $v_k$ being connected to $w_j$ in $N_{v_i}^{w_k}$ and $v_j$ being
  connected to all vertices in $N_{v_i}^{w_k}$ except $w_j$. Hence
  these relations are reversed after complementation. Furthermore,
  $N_{v_k} \setminus N_{v_i}^{w_k} = N_{v_j} \setminus N_{v_i}^{w_k} =
  \{w_k, w_i\}$, so isomorphism is preserved. We have shown that the
  ELC orbit of $G$ has size two. Since the minimum vertex degree over
  the ELC orbit is 3, the minimum distance of $\mathcal{C}$ is 4.
\end{proof}

\section{Conclusions}\label{sec:conc}

We have introduced ELC-preserved graphs as a new class of graphs,
found all ELC-preserved graphs of order up to 12 and all ELC-preserved
bipartite graphs of order up to 16, and shown how all these graphs
arise from general constructions. It remains an open problem to prove
that all ELC-preserved graphs arise from these constructions, or give
an example to the contrary. We therefore pose the following question.

\begin{prob}
Is a connected ELC-preserved graph of order $n$ always either $s^n$,
where $n$ is prime, $H_k^m$, where $n = 7k + m$, $h^r$, where $n = 2^r
- 1$, $h^r_e$ or $h^r_*$, where $n = 2^r$, or can it be obtained as
$S_+^m(G)$, $S_-^m(G)$, or $C^m(G)$, where $G$ is an ELC-preserved
graph of order $\frac{n}{m}$, or $H(G)$, where $G$ is an ELC-preserved
graph of order $\frac{n}{7}$?
\end{prob}

Note that not all star graphs and complete graphs are primitive
ELC-preserved graphs, since most of them can be obtained as
follows. From the graph $e^1$, we can obtain all $c^n = C^n(e^1)$.
From $s^2 = C^2(e^1)$, we obtain all $s^n = S^{\frac{n}{2}}(s^2)$
where $n$ is even.  More generally, for $n=pq$ a composite number,
$s^n = S_+^p(s^{\frac{n}{q}}) = S_+^q(s^{\frac{n}{p}})$, so only $s^p$
with $p$ an odd prime is a primitive ELC-preserved graph.

\begin{prob}
Enumerate or classify ELC-preserved graphs of order $n > 12$ and
ELC-preserved bipartite graphs of order $n > 16$.
\end{prob}
 
Our classification used a previous complete classification of ELC
orbits~\cite{classelc}, and a graph extension technique to obtain all
bipartite ELC-preserved graphs of order 16. Perhaps the complexity of
classification could be reduced by further exploiting restrictions on
the structure of ELC-preserved graphs.

ELC-preserved graphs are an interesting new class of graphs from a
theoretical point of view. As discussed in Section~\ref{sec:intro}, LC
and ELC orbits of graphs show up in many different fields of research,
and ELC-preserved graphs may also be of interest in these contexts. We
have seen that one possible use for bipartite ELC-preserved graphs is
in iterative decoding of error-correcting codes.  Hamming codes are
perfect, but for this application we would like codes with rate
$\frac{k}{n} \approx \frac{1}{2}$.  Such ELC-preserved codes obtained
from our constructions do not have minimum distance that can compete
with the best known codes of similar length, except for the optimal
$[8,4,4]$ code ($h^3_e$), for which iterative decoding has been
simulated with good results~\cite{castle}, and the optimal $[14,7,4]$
code ($H(s^2)$).  Longer codes obtained from Hamming expansion will
always have minimum distance~4, as shown in
Theorem~\ref{thm:hexpcodes}.  Codes that have a negligible number of
low weight codewords can still have good decoding performance, but the
number of weight~4 codewords in these codes grows linearly with the
length, since the number of degree~3 vertices in the corresponding
graphs does so, and hence the codes are not well suited for this
application. It is therefore interesting to consider ELC orbits of
size two, one of them corresponding the extended Golay code of
length~24, for which iterative decoding with ELC has been simulated
with good results~\cite{isit}.  For codes of higher length, however,
this criteria is probably also too restrictive. Graphs with ELC orbits
of bounded size could be more suitable for this application, and would
be interesting to study from a graph theoretical point of view.  For
some graphs, ELC on certain edges will preserve the graph, while ELC
on other edges may not.  Iterative decoding where only ELC on the
subset of edges that preserve the graph are allowed has been
studied~\cite{castle}.  Graphs where ELC on certain edges preserve the
number of edges in the graph, or keep the number of edges within a
given bound, have also been considered in iterative
decoding~\cite{wbelc}.  ELC-preserved graphs are clearly a subclass of
the graphs where all ELC orbit members have the same number of edges.
This class of graphs, and other possible generalizations of
ELC-preserved graphs, would be interesting to study further.

\paragraph*{Acknowledgements}
This research was supported by the Research Council of Norway.  The
authors would like to thank the anonymous reviewers for providing
useful suggestions and corrections that improved the quality of the
manuscript.

{\small

}

\end{document}